\documentclass[reqno,12pt]{amsart}

\usepackage{amsfonts}
\usepackage{amsmath}
\usepackage{amssymb}

\setlength{\parindent}{0in} \setlength{\parskip}{0.5\baselineskip}
\setlength{\footskip}{30pt} \tolerance=1000
\setlength{\voffset}{-0.5in} \setlength{\hoffset}{-0.5in}
\setlength{\textheight}{9in} \setlength{\textwidth}{6in}


\newtheorem{theorem}{Theorem}

\newtheorem{corollary}[theorem]{Corollary}

\newtheorem{definition}[theorem]{Definition}

\newtheorem{lemma}[theorem]{Lemma}

\newtheorem{proposition}[theorem]{Proposition}
\newtheorem{remark}[theorem]{Remark}

\newcommand\bld[1]{\boldsymbol{#1}}
\def\bE{\mathbb{E}}

\numberwithin{theorem}{section}
\numberwithin{equation}{section}

\makeatletter
\@namedef{subjclassname@2020}{\textup{2020} Mathematics Subject Classification}
\makeatother

\begin{document}
\title[Stationary NSE]
{Intrusive and Non-Intrusive Polynomial Chaos Approximations for a Two-Dimensional Steady State Navier-Stokes System with Random Forcing}

\author{S. V. Lototsky}\thanks{SVL: Research supported by ARO Grants W911N-16-1-0103 and  FA9550-21-1-0015}
\curraddr[S. V. Lototsky]{Department of Mathematics, USC\\
Los Angeles, CA 90089}
\email[S. V. Lototsky]{lototsky@usc.edu}
\urladdr{https://dornsife.usc.edu/sergey-lototsky/}

\author{R. Mikulevicius}
\curraddr[R. Mikulevicius]{Department of Mathematics, USC\\
Los Angeles, CA 90089}
\email[R. Mikulevicius]{mikulvcs@usc.edu}
\urladdr{https://dornsife.usc.edu/cf/faculty-and-staff/faculty.cfm?pid=1003536}

\author{B. L. Rozovsky}\thanks{RM and BLR: Research supported by ARO Grant W911N-16-1-0103}
\curraddr[B. L. Rozovsky]{Division of Applied Mathematics\\
Brown University\\
Providence, RI 02912}
\email[B. L. Rozovsky]{Boris\_Rozovsky@Brown.edu}
\urladdr{www.dam.brown.edu/people/rozovsky/rozovsky.htm}

\subjclass[2020]{35Q30,  35R60, 60H35,  65N15, 65N30, 65N35, 76D05, 76M22}

 \keywords{Gauss Quadrature, Generalized Polynomial Chaos,
 Stochastic Galerkin Approximation}

\begin{abstract}

While convergence of polynomial chaos approximation
for linear equations is relatively well understood, a lot less is known
for non-linear equations. The paper investigates this convergence
 for a particular equation with  quadratic  nonlinearity.

\end{abstract}

\maketitle

\today

\section{Introduction}
There are two main ways to study an
 equation with random input. One way is to use deterministic tools for
 each particular realization of randomness; in what follows, we call it
 {\tt path-wise approach}. An alternative, which we call   {\tt random field approach}, is to consider the random input as an additional independent
 variable in the equation, along with space and/or time.

 Questions such as existence/uniqueness/regularity of the solution are
 often addressed with a combination of the two approaches; cf.
 \cite{KV, KZ} for ordinary differential equations and
 \cite{Kr_Lp1, mr04, mr05} for equations with partial derivatives.

 The difference between the two approaches becomes noticeable in
 numerical computations; see, for example,
 \cite{Ghanem-book, DXiu-book}.
  Path-wise approach leads to repeated
 numerical solutions of the underlying equation
 for various realizations  of the random input;
 a typical example is Monte Carlo simulations. In computational terms,
  this approach is  {\tt non-intrusive}, because  no new numerical procedures
 are required to solve the
 equation compared to the deterministic case.

  The random field approach reduces the problem to a fixed system of
 deterministic equations via a stochastic Galerkin approximation; in many cases, the result is a
  generalized polynomial chaos (gPC) expansion
 \cite[Chapter 6]{DXiu-book}.  In computational terms, this approach is   {\tt intrusive},
  because  the resulting system   is more complicated than the original
 equation and requires different numerical procedures to obtain a solution.

  The stochastic collocation method \cite[Chapter 7]{DXiu-book}, with
 sampling at  pre-determined  realizations of the random input,
 somewhat bridges the gap between pure random sampling
 (Monte Carlo) and  complete elimination of randomness
  (gPC). In computational terms, the method is {\em non-intrusive}
  \cite{Non-intrusive-gen1, Non-intrusive-gen2}. In this paper, we consider the
  {\em discrete projection}, or {\em pseudo-spectral} version of the method, when the sampled solution is
  used to approximate the coefficients in the chaos expansion via Gauss quadrature.

 For many, although apparently not all \cite{Anti-gPC}, equations,
 various empirical studies
  \cite[etc.]{HWRZ,StatStochNSE3,StatStochNSE2} suggest that
 the stochastic Galerkin approximation method, with a
 fixed computational cost, can be a much more efficient
 way to study statistical properties of the solution than  Monte Carlo
 or stochastic collocation methods.
  In the case of nonlinear equations,
  this experimental success has yet to be fully justified theoretically;
  for linear equations, the picture is rather clear; see, for example,
  \cite{BdKl, LMR, LR-AP} as well as
  \cite[Chapter 5]{LR-Book} and \cite[Section 8.3]{RL-Book}.

  Accordingly, our objective in this paper is to carry out a comparative
  theoretical analysis of  an intrusive  and  a non-intrusive  approximations for a particular nonlinear equation.
  Specifically, we consider the stationary Navier-Stokes system in a
  smooth bounded planar domain with zero boundary conditions and with
   randomness in the external force, and we establish {\em a priori} error bounds for both approximations. 

 The paper is organized as follows. Section \ref{sec1} describes the
 model and introduces the necessary function spaces. Section
 \ref{sec2} introduces the stochastic Galerkin approximation and gives the
 proof of convergence. Section \ref{sec:GQ}
 investigates a non-intrusive pseudo-spectral approximation.
 Section \ref{sec4} puts the results in a broader context.

Throughout the paper, $G$ is a bounded domain in
$\mathbb{R}^2$ with area $|G|$ and
sufficiently regular (e.g. locally Lipschtiz) boundary $\partial G$.
We use the following convention with the notations of various
function spaces and their elements: if $X$ denotes a space of scalar
 fields $f$ on $G$, then $\mathbf{X}$ denotes the corresponding
 space of vector fields $\mathbf{f}$,
 and $\mathbb{X}$ denotes the collection of
  $\mathbf{X}$-valued random elements $\bld{f}$.

\section{The Setting}
\label{sec1}
Let $\left( \Omega ,\mathcal{F},\mathbb{P}\right) $
be a probability space such that the   $L_2(\Omega)$ is a separable Hilbert space and has a complete orthogonal basis
$\{\mathfrak{P}_{n},\ n\geq 0\}$: with
\begin{equation}
\label{c(l)}
c(n)=\mathbb{E}\mathfrak{P}_{n}^2,
\end{equation}
 every $\zeta\in L_2(\Omega)$ can be represented as
  $$
\zeta=\sum_{k\geq 0}
\frac{\mathbb{E}\big(\zeta\mathfrak{P}_k)}{c(k)}
\mathfrak{P}_{k}.
$$
In what follows, we always assume that the basis $\{\mathfrak{P}_{n},\ n\geq 0\}$ has the following property:
for every $m,n\geq 0$, there are {\em finitely many} real numbers  $A_{m,n;l},\ l\geq 0,$ such that
\begin{equation}
\label{eq:Amnk}
 \mathfrak{P}_{m} \mathfrak{P}_{n}=
 \sum_{l\geq 0} A_{m,n;l} \mathfrak{P}_{l};
\end{equation}
in that case
$$
A_{m,n;l}=
\frac{\mathbb{E}\big(\mathfrak{P}_{m}\mathfrak{P}_{n}\mathfrak{P}_{l}\big)}{c(l)}.
$$
Property \eqref{eq:Amnk} holds when each $\mathfrak{P}_{n}$ is a polynomial or a tensor product of polynomials.

  Denote by  $\mathcal{P}^{N}$ the orthogonal projection
   in $L_2(\Omega)$  on the subspace spanned  by
$\left\{ \mathfrak{P}_{k},\ k=0,\ldots ,N\right\}.$

Consider  a steady-state
 Navier-Stokes system with random forcing in a  bounded
 domain $G\subset \mathbb{R}^2$ with sufficiently regular
  boundary $\partial G$:
\begin{eqnarray}
\nu \Delta \bld{u}\left( x\right) &=&
\big(\bld{u}\cdot\nabla)\bld{u} +\nabla p\left( x\right) +\bld{f}(x),\
 x\in G,  \label{10} \\
\mathrm{div}\,{}\bld{u}\left( x\right) &=&0,\
x\in G,\ \bld{u}|_{\partial G}=0.
\notag
\end{eqnarray}
In equation \eqref{10},
\begin{itemize}
\item $\nu>0$ is the kinematic viscosity coefficient,
 $x=(x_1,x_2)$,
    $\Delta=\frac{\partial^2}{\partial x_1^2}
    +\frac{\partial^2}{\partial x_2^2}$ is the Laplace operator, and $\nu$ is constant;
\item
$\bld{u}\left( x\right) =\left( u^{1}\left( x\right) ,u^{2}\left( x\right)
\right) $ is the (unknown) velocity  and
\begin{equation}
\label{operations}
\mathrm{div}\, \bld{u}=\nabla\cdot\bld{u}=
\frac{\partial u^1}{\partial x_1}+\frac{\partial u^2}{\partial x_2},\ \
\big(\bld{u}\cdot\nabla)u^i=u^1\frac{\partial u^i}{\partial x_1}+
u^2 \frac{\partial u^i}{\partial x_2},\ i=1,2;
\end{equation}
\item  $p=p\left( x\right)$ is the (unknown scalar) pressure and
$\big(\nabla p)^i=\frac{\partial p}{\partial x_i},\ i=1,2$;
\item $\bld{f}$ is the random forcing.
 \end{itemize}
 Two standard  references for the deterministic counterpart of \eqref{10}
 are  \cite[Chapter IX]{ga} and \cite[Chapter II]{temam}.

We will use the following function spaces:
\begin{itemize}
\item $\mathcal{C}_{0}^{\infty}(G)$, the collection of
infinitely differentiable real-valued
functions on $G$ with compact support in $G$;
\item $\mathcal{D}\left( G\right)
=\left\{ \bld{\varphi}=(\varphi^1,\varphi^2),\
 \varphi^i\in \mathcal{C}_{0}^{\infty}\left( G\right),\ i=1,2 :
 \mathrm{div}\,{}\bld{\varphi }=0\right\};$

\item $L^{r}\left( G\right) $, $1\leq r<+\infty$,
 the collection of measurable functions $g$ on $G$
such that
$$
\left\vert g\right\vert _{L^{r}}=\left( \int_{G}\left\vert
g(x)\right\vert ^{r}\, dx\right) ^{1/r}<\infty;
$$
for ${g}, {f}\in  L^{2}\left( G\right) $,
we write
\begin{equation*}
\left( {f,g}\right)_0 =\int_{G} f(x)g(x)\,dx;
\end{equation*}

\item  $\mathbf{L}^{r}\left( G\right)$, the collection of vector
fields $\mathbf{g}=\left(g^{1},g^{2}\right)$ on $G$ such that $g^{1},g^{2}\in L^{r}\left( G\right)$, and endowed
with norm
\begin{equation*}
\left\vert\mathbf{g}\right\vert _{\mathbf{L}^{r}}
=\Big( \left\vert g^{1}\right\vert_{L^{r}}^{r}
+\left\vert g^{2}\right\vert _{L^{r}}^{r}\Big) ^{1/r};
\end{equation*}%
for $\mathbf{g}, \mathbf{f}\in \mathbf{L}^{2}\left( G\right) $,
we write
\begin{equation*}
\left( \mathbf{f,g}\right)_0 =\int_{G}
\Big(f^{1}(x)g^{1}(x)+f^{2}(x)g^{2}(x)\Big)\,dx;
\end{equation*}

\item $\mathbb{L}^{2}\left( G\right)
    =L^2\big(\Omega;\mathbb{L}^2(G)\big)$, that is, the collection
of  $\mathbf{L}^{2}\left( G\right) $-valued  random elements
 $$
 \bld{g}(\omega,x)=
 \big( g^{1}\left(\omega,x\right), g^{2}\left(\omega, x\right) \big)
 $$
such that
\begin{equation*}
\left\vert \bld{g}\right\vert _{\mathbb{L}^{2}}
=\left( \mathbb{E}
\left\vert \bld{g}\right\vert _{\mathbf{L}^{2}}^{2}\right)^{1/2}
<\infty;
\end{equation*}

\item $H_{0}^{1,2}\left( G\right) $, the completion
 of $\mathcal{C}_{0}^{\infty}\left( G\right) $ with respect to the norm
\begin{equation*}
\left\vert g\right\vert _{1,2}
=\left( \int_{G}
\left\vert \nabla g(x)\right\vert^{2}\,dx\right) ^{1/2}=
\left( \int_{G}\left(
\left\vert \frac{\partial g(x)}{\partial x_1}\right\vert^{2}
+\left\vert \frac{\partial g(x)}{\partial x_2}\right\vert^{2}
\right)\,
dx\right) ^{1/2};
\end{equation*}%
note that $|\cdot|_{1,2}$ is  indeed a norm on $\mathcal{C}_{0}^{\infty}\left( G\right)$ because,
 by a version of the Poincar\'{e} inequality,
 if $g\in \mathcal{C}_{0}^{\infty}\left( G\right)$
  and $|G|$ is the Lebesgue measure (area) of $G$, then
  (cf. \cite[Exercise II.5.4]{ga})
\begin{equation}
\label{PI}
\left\vert g\right\vert _{L^{2}}^2
\leq \frac{\left\vert G\right\vert}{2}
\left\vert g\right\vert _{1,2}^2;
\end{equation}

\item $\mathbf{H}_{0}^{1,2}\left( G\right)$,
the collection of  vector
fields $\mathbf{g}=\left( g^{1},g^{2}\right) $ on $G$ such that $g^{1},g^{2}\in H_{0}^{1,2}\left( G\right)$, and endowed
with norm
\begin{equation*}
\left\vert \mathbf{g}\right\vert _{1,2}
=\left( \left\vert g^{1}\right\vert_{1,2}^{2}
+\left\vert g^{2}\right\vert _{1,2}^{2}\right) ^{1/2};
\end{equation*}
for $\mathbf{f}, \mathbf{g}\in \mathbf{H}^{1,2}_0(G)$,
we write
\begin{equation}
\label{trace}
\big(\nabla \mathbf{f},\nabla \mathbf{g}\big)_0=
\sum_{i,j=1}^2 \int_G \left(\frac{\partial f^i(x)}{\partial x_j}\
\frac{\partial g^i(x)}{\partial x_j}\right)\,dx,
\end{equation}
so that
$$
|\mathbf{g}|_{1,2}^2=\big(\nabla \mathbf{g},
\nabla \mathbf{g}\big);
$$

\item  $\mathbb{H}_{0}^{1,2}\left( G\right)=
    L^2\big(\Omega;\mathbf{H}_{0}^{1,2}(G)\big)$;

\item $\widehat{\mathbf{H}}^{1,2}_0(G)$, the completion of
$\mathcal{D}(G)$ with respect to the norm $|\cdot|_{1,2}$;

\item $\widehat{\mathbb{H}}^{1,2}_0(G)=
L^2\big(\Omega;\widehat{\mathbf{H}}^{1,2}_0(G)\big)$;

\item $H_{0}^{-1,2}\left( G\right)$, the completion
of $L^{2}\left( G\right) $
with respect to the norm
\begin{equation*}
\left\vert g\right\vert _{-1,2}=\sup \left\{
\int_G g(x)\varphi(x) \,dx,\
\varphi \in H_{0}^{1,2}\left( G\right),\
\vert\varphi \vert _{1,2}\leq 1,\right\};
\end{equation*}

\item $\mathbf{H}_{0}^{-1,2}\left( G\right)$, the collection of vector
fields $\mathbf{g}=\left( g^{1},g^{2}\right)$
such that $g^{1},g^{2}\in H_{0}^{-1,2}\left( G\right),$ and
endowed with norm
\begin{equation*}
\left\vert \mathbf{g}\right\vert_{-1,2}
=\left( \left\vert g^{1}\right\vert_{-1,2}^{2}
+\left\vert g^{2}\right\vert _{1,2}^{2}\right) ^{1/2};
\end{equation*}

\item  $\mathbb{H}_{0}^{-1,2}\left(G\right)
=L^2\big(\Omega;\mathbf{H}_{0}^{-1,2}(G)\big).$
\end{itemize}

The (Banach space) dual of $H_0^{1,2}\left( G\right) $ is
 isomorphic to $H_0^{-1,2}\left( G\right)$:
  see \cite[Theorem II.3.5]{ga}.
We denote the corresponding duality  by
$\left\langle f,g\right\rangle _{1}$,
 $f\in H_{0}^{-1,2}\left( g\right) ,g\in H_{0}^{1,2}\left(
G\right).$
Similarly, the dual of $\mathbf{H}^{1,2}_0\left( G\right) $ is isomorphic to $\mathbf{H}^{-1,2}_0\left( G\right) $ and the duality is denoted by $\left\langle
\mathbf{f},\mathbf{g}\right\rangle_{1}$,
$\mathbf{f}\in \mathbf{H}%
_{0}^{-1,2}\left( G\right),\
\mathbf{g}\in \mathbf{H}_{0}^{1,2}
\left(G\right).$

For $\left( \mathbf{u,v,w}\right) \in \mathbf{H}^{1,2}_0\left( G\right) \times
\mathbf{H}^{1,2}_0\left( G\right) \times \mathbf{H}_{0}^{1,2}\left( G\right)$, we define the tri-linear form
\begin{equation}
\label{form-a}
\mathfrak{a}\left( \mathbf{u,v,w}\right)
=\big(
 (\mathbf{u}\cdot\nabla)\mathbf{v},\mathbf{w}
\big)_0;
\end{equation}
similar to \eqref{operations},
$$
\big(\mathbf{u}\cdot\nabla)v^i=u^1\frac{\partial v^i}{\partial x_1}+
u^2 \frac{\partial v^i}{\partial x_2},\ i=1,2.
$$

\begin{lemma}
\label{le1} The trilinear form $\mathfrak{a}$ has the following
properties:
\begin{enumerate}
\item  If $\left( \mathbf{u,v,w}\right) \in \mathbf{H}^{1,2}_0\left( G\right) \times
\mathbf{H}^{1,2}_0\left( G\right) \times \mathbf{H}_{0}^{1,2}\left( G\right)$, then
\begin{align}
\label{cont-a0}
&\left\vert\mathfrak{a}\left( \mathbf{u,v,w}\right) \right\vert
\leq\frac{\sqrt{\left\vert G\right\vert}}{2} \,\left\vert
\mathbf{u}\right\vert_{1,2}\left\vert \mathbf{v}\right\vert _{1,2}
\left\vert \mathbf{w}\right\vert _{1,2},\\
\label{cont-a-diff}
& |\mathfrak{a}( \mathbf{u,u,w})-\mathfrak{a}( \mathbf{v,v,w})|\leq \frac{\sqrt{\left\vert G\right\vert}}{2} \,
|\mathbf{u}-\mathbf{v}|_{1,2}\big(|\mathbf{u}|_{1,2}+|\mathbf{v}|_{1,2}\big)\left\vert \mathbf{w}\right\vert _{1,2};
\end{align}%

\item If   $\mathbf{u}\in \widehat{\mathbf{H}}^{1,2}_0(G)$, then
\begin{equation}
\label{CP}
\mathfrak{a}(\mathbf{u},\mathbf{v},\mathbf{v})=0
\end{equation}
and
\begin{equation}
\label{AS}
\mathfrak{a}(\mathbf{u},\mathbf{v},\mathbf{w})=
-\mathfrak{a}(\mathbf{u},\mathbf{w},\mathbf{v}).
\end{equation}
\end{enumerate}
\end{lemma}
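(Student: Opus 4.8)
The plan is to handle the two parts by separate mechanisms: the continuity bounds in (1) rest on a two-dimensional Ladyzhenskaya-type inequality with a carefully tracked constant, while the algebraic identities in (2) follow from integration by parts together with a density argument that exploits the continuity already proved. For \eqref{cont-a0} I would first record the pointwise Cauchy--Schwarz bound $|u^1\partial_1 v^i+u^2\partial_2 v^i|\le |\mathbf{u}|\,|\nabla v^i|$ in the summation index $j$, then sum over $i$ and apply Cauchy--Schwarz once more in $i$ to get
$$
|\mathfrak{a}(\mathbf{u},\mathbf{v},\mathbf{w})|\le \int_G |\mathbf{u}|\,|\nabla\mathbf{v}|\,|\mathbf{w}|\,dx,
$$
where $|\mathbf{u}|^2=(u^1)^2+(u^2)^2$ and $|\nabla\mathbf{v}|^2=\sum_{i,j}(\partial_j v^i)^2$. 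A Hölder inequality with exponents $(4,2,4)$ then isolates the two factors $\big|\,|\mathbf{u}|\,\big|_{L^4}$ and $\big|\,|\mathbf{w}|\,\big|_{L^4}$, since $\big|\,|\nabla\mathbf{v}|\,\big|_{L^2}=|\mathbf{v}|_{1,2}$ exactly.

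The crux is the planar inequality $|g|_{L^4}^2\le \tfrac{1}{\sqrt 2}\,|g|_{L^2}\,|g|_{1,2}$, which I would prove for $g\in\mathcal{C}_0^\infty(G)$ by the slicing argument: from $g(x)^2\le \int_{\mathbb{R}}|g\,\partial_1 g|\,dy_1$ and $g(x)^2\le \int_{\mathbb{R}}|g\,\partial_2 g|\,dy_2$, multiplying, integrating over $G$, and using Cauchy--Schwarz with $2ab\le a^2+b^2$, one obtains $|g|_{L^4}^4\le \tfrac12 |g|_{L^2}^2|g|_{1,2}^2$. Feeding in the Poincaré inequality \eqref{PI} as $|g|_{L^2}\le \sqrt{|G|/2}\,|g|_{1,2}$ upgrades this to $|g|_{L^4}^2\le \tfrac{\sqrt{|G|}}{2}|g|_{1,2}^2$. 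Applying this (extended by density to $H_0^{1,2}(G)$) to the scalar moduli $g=|\mathbf{u}|$ and $g=|\mathbf{w}|$ — legitimate because $|\nabla|\mathbf{u}||\le|\nabla\mathbf{u}|$ pointwise, so $|g|_{1,2}\le|\mathbf{u}|_{1,2}$ — gives each $L^4$ factor a bound $\tfrac{|G|^{1/4}}{\sqrt 2}|\cdot|_{1,2}$, and the two factors combine to the sharp constant $\tfrac{\sqrt{|G|}}{2}$ of \eqref{cont-a0}. The difference estimate \eqref{cont-a-diff} is then purely algebraic: by bilinearity of $\mathfrak{a}$ in its first two slots, $\mathfrak{a}(\mathbf{u},\mathbf{u},\mathbf{w})-\mathfrak{a}(\mathbf{v},\mathbf{v},\mathbf{w})=\mathfrak{a}(\mathbf{u}-\mathbf{v},\mathbf{u},\mathbf{w})+\mathfrak{a}(\mathbf{v},\mathbf{u}-\mathbf{v},\mathbf{w})$, and applying \eqref{cont-a0} to each summand yields the claim.

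For part (2) I would first verify \eqref{CP} for smooth data, namely $\mathbf{u}\in\mathcal{D}(G)$ and $\mathbf{v}$ with components in $\mathcal{C}_0^\infty(G)$: writing $\mathfrak{a}(\mathbf{u},\mathbf{v},\mathbf{v})=\sum_{i,j}\int_G u^j(\partial_j v^i)v^i\,dx=\tfrac12\sum_j\int_G u^j\,\partial_j(|\mathbf{v}|^2)\,dx$ and integrating by parts (boundary terms vanish by compact support) gives $-\tfrac12\int_G(\mathrm{div}\,\mathbf{u})|\mathbf{v}|^2\,dx=0$. To reach the stated generality I pass to the limit twice, in the correct order. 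First, for fixed $\mathbf{u}\in\mathcal{D}(G)$, the map $\mathbf{v}\mapsto\mathfrak{a}(\mathbf{u},\mathbf{v},\mathbf{v})$ is continuous on $\mathbf{H}^{1,2}_0(G)$ by the bilinearity trick and \eqref{cont-a0}, so the identity extends to all $\mathbf{v}\in\mathbf{H}^{1,2}_0(G)$. Second, for such fixed $\mathbf{v}$, the map $\mathbf{u}\mapsto\mathfrak{a}(\mathbf{u},\mathbf{v},\mathbf{v})$ is linear and continuous by \eqref{cont-a0} and vanishes on $\mathcal{D}(G)$, hence on its $|\cdot|_{1,2}$-closure $\widehat{\mathbf{H}}^{1,2}_0(G)$. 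Finally \eqref{AS} follows by polarization: for fixed $\mathbf{u}\in\widehat{\mathbf{H}}^{1,2}_0(G)$ the form is bilinear in its last two arguments, so expanding $0=\mathfrak{a}(\mathbf{u},\mathbf{v}+\mathbf{w},\mathbf{v}+\mathbf{w})$ and cancelling the two diagonal terms via \eqref{CP} leaves $\mathfrak{a}(\mathbf{u},\mathbf{v},\mathbf{w})+\mathfrak{a}(\mathbf{u},\mathbf{w},\mathbf{v})=0$.

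The bilinearity bookkeeping and the polarization are routine, and the integration by parts is elementary once the data are smooth. The step that requires genuine care is the Ladyzhenskaya inequality with the \emph{sharp} constant $1/\sqrt2$, together with the observation that applying it to the Euclidean moduli $|\mathbf{u}|,|\mathbf{w}|$ (rather than to the individual components) is precisely what produces $\tfrac{\sqrt{|G|}}{2}$ and not a larger constant. The second point needing attention is the ordering of the density argument for \eqref{CP}: the closure in the second slot must be taken over all of $\mathbf{H}^{1,2}_0(G)$ and the closure in the first slot only over the divergence-free space $\widehat{\mathbf{H}}^{1,2}_0(G)$, since only the first argument is assumed divergence-free.
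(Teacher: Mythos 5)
Your proof is correct and follows exactly the route the paper indicates: the paper defers to \cite[Lemmas IX.1.1 and IX.2.1]{ga}, noting only that \eqref{cont-a0} is H\"older's inequality \eqref{cont-a1} with $q=r=4$ plus an embedding, and your argument is a complete, careful realization of that plan, with the two-dimensional Ladyzhenskaya inequality (sharp constant $1/\sqrt{2}$) and the Poincar\'e inequality \eqref{PI} combining to give precisely the constant $\sqrt{|G|}/2$. The bilinearity decompositions for \eqref{cont-a-diff}, the integration by parts plus two-stage density argument for \eqref{CP}, and the polarization for \eqref{AS} are all sound.
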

For the proofs, see \cite[Lemma IX.1.1]{ga} and
 \cite[Lemma IX.2.1]{ga}, respectively. Note that \eqref{cont-a0}
 follows from the H\"{o}lder inequality
 \begin{equation}
 \label{cont-a1}
 |\mathfrak{a}(\mathbf{u},\mathbf{v},\mathbf{w})|
 \leq |\mathbf{u}|_{\mathbf{L}^q}\,
 |\mathbf{v}|_{1,2}\,
 |\mathbf{w}|_{\mathbf{L}^r},
  \ \ \frac{1}{q}+\frac{1}{r}=\frac{1}{2},
  \end{equation}
by taking $q=r=4$ and using a suitable embedding theorem;
for other versions of \eqref{cont-a0} and \eqref{cont-a1},
see, for example, \cite[Exercise IX.2.1]{ga}.

In particular, with $\mathbf{w}=\mathbf{u}-\mathbf{v}$,
$$
\mathfrak{a}( \mathbf{u,u,w})-\mathfrak{a}( \mathbf{v,v,w}) =
\mathfrak{a}( \mathbf{w,u,w})+\mathfrak{a}( \mathbf{v,w,w}),
$$
so that, if $\mathbf{v}\in \widehat{\mathbf{H}}^{1,2}_0(G)$, then  \eqref{cont-a0}
and \eqref{CP} imply
\begin{equation}
\label{cont-a-diff1}
|\mathfrak{a}( \mathbf{u,u,u-v})-\mathfrak{a}( \mathbf{v,v,u-v})|\leq \frac{\sqrt{\left\vert G\right\vert}}{2} \,
|\mathbf{u}-\mathbf{v}|_{1,2}^2\,|\mathbf{u}|_{1,2}.
\end{equation}

Similar to \cite[Definition IX.1.1]{ga}, we have
\begin{definition}
\label{d1}
Let $\bld{f}\in \mathbb{H}_{0}^{-1,2}\left( G\right) $.
A random vector field
 $\bld{u}\in \widehat{\mathbb{H}}_{0}^{1,2}\left( G\right) $ is  called
 a solution to \eqref{10} if,
for every $\bld{\varphi}\in \mathcal{D}(G)$,
\begin{equation}
\label{ff1}
\mathbb{P}\Big(\,\nu\, \big(\nabla \bld{u},\nabla \bld{\varphi}\big)_0
 +
 \mathfrak{a}(\bld{u},\bld{u},\bld{\varphi})
  =
  -\left\langle \bld{f},\bld{\varphi }\right\rangle _{1}\Big) =1,
\end{equation}
where $\mathfrak{a}$ is the tri-linear form \eqref{form-a}.
\end{definition}

By applying \cite[ Lemma IX.1.2]{ga} to  \eqref{10}
with a particular realization of $\xi$, we get the following result.

\begin{lemma}
\label{le2}
If  $\bld{f}\in \mathbb{H}_{0}^{-1,2}\left( G\right)$,
 $\bld{u}\in \widehat{\mathbb{H}}_{0}^{1,2}\left(G\right)$,
  and \eqref{ff1} holds, then
   there exists a $p\in\mathbb{L}^{2}\left( G\right) $ with
$\mathbb{P}\left(\int_{G}p(x)\,dx=0\right)=1$ such that,
for every $\bld{\varphi}\in\mathbf{H}_{0}^{1,2}\left(G\right)$,
\begin{equation}
\label{ff2}
\mathbb{P}\Big(
\nu \,  \big(\nabla \bld{u},\nabla \bld{\varphi}\big)_0
+
\mathfrak{a}(\bld{u},\bld{u},\bld{\varphi})
 =
 \left( p,\mathrm{div}\,{}\bld{\varphi }\right)_0
-
\left\langle \bld{f},\bld{\varphi }\right\rangle _{1}\Big)=1.
\end{equation}
\end{lemma}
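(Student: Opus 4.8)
The plan is to derive \eqref{ff2} from \eqref{ff1} \emph{path-wise}, invoking the deterministic pressure-recovery result \cite[Lemma IX.1.2]{ga} (a variational form of De Rham's theorem) for almost every realization, and then to verify that the pressure so constructed is a genuine element of $\mathbb{L}^2(G)$, that is, measurable in $\omega$ and square-integrable over $\Omega$. First I would fix $\omega$ in the set of full $\mathbb{P}$-measure on which the identity inside \eqref{ff1} holds for all $\bld{\varphi}\in\mathcal{D}(G)$, and introduce the linear functional
\[
\ell_\omega(\bld{\varphi})=\nu\,\big(\nabla\bld{u},\nabla\bld{\varphi}\big)_0+\mathfrak{a}(\bld{u},\bld{u},\bld{\varphi})+\langle\bld{f},\bld{\varphi}\rangle_1,\qquad \bld{\varphi}\in\mathbf{H}_0^{1,2}(G).
\]
By \eqref{cont-a0} and the definition of $|\cdot|_{-1,2}$, $\ell_\omega$ is bounded on $\mathbf{H}_0^{1,2}(G)$, with $\|\ell_\omega\|\le \nu\,|\bld{u}|_{1,2}+\tfrac{\sqrt{|G|}}{2}\,|\bld{u}|_{1,2}^2+|\bld{f}|_{-1,2}$. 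Since $\widehat{\mathbf{H}}_0^{1,2}(G)$ is the $|\cdot|_{1,2}$-closure of $\mathcal{D}(G)$ and $\ell_\omega$ vanishes on $\mathcal{D}(G)$ by \eqref{ff1}, continuity forces $\ell_\omega\equiv0$ on $\widehat{\mathbf{H}}_0^{1,2}(G)$, so $\ell_\omega$ annihilates the divergence-free subspace. The deterministic lemma then yields a unique zero-mean $p(\omega,\cdot)\in L^2(G)$ with $\ell_\omega(\bld{\varphi})=(p,\mathrm{div}\,\bld{\varphi})_0$ for all $\bld{\varphi}\in\mathbf{H}_0^{1,2}(G)$, which is exactly the identity inside \eqref{ff2}.

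It remains to settle the two probabilistic points. For measurability, I would note that the map sending $\ell_\omega$ to the zero-mean $p$ is a fixed bounded linear operator $\mathbf{H}_0^{-1,2}(G)\to L^2(G)$ (its norm is the reciprocal of the inf-sup constant of the divergence on $G$, realized for instance through the Bogovskii operator). The residual $\nu\Delta\bld{u}-(\bld{u}\cdot\nabla)\bld{u}-\bld{f}$, read as an $\mathbf{H}_0^{-1,2}(G)$-valued map of $\omega$, is measurable because $\bld{u}$ and $\bld{f}$ are random elements and the nonlinear term depends continuously on $\bld{u}$; composing with the bounded linear recovery operator keeps $\omega\mapsto p(\omega,\cdot)$ measurable.

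For square-integrability, the same inf-sup bound gives, path-wise,
\[
|p|_{L^2}\le C\Big(\nu\,|\bld{u}|_{1,2}+\tfrac{\sqrt{|G|}}{2}\,|\bld{u}|_{1,2}^2+|\bld{f}|_{-1,2}\Big),
\]
and the point is to control the quadratic term. Choosing $\bld{\varphi}=\bld{u}$ in \eqref{ff1} (legitimate since $\bld{u}(\omega,\cdot)\in\widehat{\mathbf{H}}_0^{1,2}(G)$ almost surely) and using $\mathfrak{a}(\bld{u},\bld{u},\bld{u})=0$ from \eqref{CP} gives the energy estimate $|\bld{u}|_{1,2}\le\nu^{-1}|\bld{f}|_{-1,2}$ for a.e.\ $\omega$, whence $|\bld{u}|_{1,2}^2\le\nu^{-2}|\bld{f}|_{-1,2}^2$ and $|p|_{L^2}^2\le C'\big(|\bld{f}|_{-1,2}^2+|\bld{f}|_{-1,2}^4\big)$. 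Taking expectations then yields $\mathbb{E}|p|_{L^2}^2<\infty$, i.e.\ $p\in\mathbb{L}^2(G)$, and intersecting the full-measure sets gives the almost-sure identity \eqref{ff2}.

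I expect the genuinely delicate step to be precisely this last one. Unlike the linear (Stokes) case, the quadratic nonlinearity makes the \emph{second} moment of the pressure depend on the \emph{fourth} moment of the forcing, so closing the estimate either requires upgrading the integrability assumption on $\bld{f}$ beyond $\mathbb{E}|\bld{f}|_{-1,2}^2<\infty$ or exploiting further structure; everything else in the argument is a routine path-wise transcription of the deterministic theory together with a standard measurability check.
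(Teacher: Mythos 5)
Your proposal takes essentially the same route as the paper: the paper's entire proof is the single remark that one applies the deterministic pressure-recovery result \cite[Lemma IX.1.2]{ga} to \eqref{10} ``with a particular realization of $\xi$,'' i.e.\ path-wise, and you do exactly that, additionally supplying the measurability and integrability checks that the paper leaves implicit. Those added details are correct, and the concern you raise at the end is a genuine one rather than a defect of your argument: since $|p|_{L^2}\le C\big(\nu|\bld{u}|_{1,2}+\tfrac{\sqrt{|G|}}{2}|\bld{u}|_{1,2}^2+|\bld{f}|_{-1,2}\big)$ and the energy estimate only gives $|\bld{u}|_{1,2}\le\nu^{-1}|\bld{f}|_{-1,2}$ path-wise, the conclusion $\mathbb{E}|p|_{L^2}^2<\infty$ really does require $\mathbb{E}|\bld{f}|_{-1,2}^4<\infty$, which does not follow from the stated hypothesis $\bld{f}\in\mathbb{H}_0^{-1,2}(G)$ alone. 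The paper does not address this; in the places where the lemma is actually used the forcing satisfies \eqref{small-f}, so $|\bld{f}|_{-1,2}$ is almost surely bounded and all moments are finite, but as stated the lemma's claim that $p\in\mathbb{L}^2(G)$ is not justified without either that boundedness or an explicit fourth-moment assumption. You have correctly located the one non-routine point.
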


Similarly, applying \cite[Theorems IX.2.1 and IX.3.2 ]{ga} to \eqref{10},
we get the basic existence and uniqueness result.

\begin{theorem}
\label{t1}
If $\bld{f}\in \mathbb{H}_{0}^{-1,2}\left( G\right)$, then,
with probability one, equation \eqref{10} has a solution and
$$
\mathbb{P}\big(
\nu |\bld{u}|_{1,2}\leq
|\bld{f}|_{-1,2}\big)=1.
$$
If, in addition, there exists a non-random $\theta\in (0,1)$ such that
\begin{equation}
\label{small-f}
\mathbb{P}\left(
\left\vert \bld{f}\right\vert _{-1,2} \leq
\frac{2\theta \nu^{2}}{\sqrt{|G|}}
\right)=1,
\end{equation}%
then the solution is unique and satisfies
\begin{equation}
\label{small-u}
\mathbb{P}\left(
\left\vert \bld{u}\right\vert _{1,2} \leq
\frac{2\nu\theta}{\sqrt{|G|}}
\right)=1.
\end{equation}
\end{theorem}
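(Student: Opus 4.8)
The plan is to lift the deterministic existence and uniqueness theory of \cite[Theorems IX.2.1 and IX.3.2]{ga} to the random setting by arguing path-wise in $\omega$ and then supplying the measurability and integrability that place the solution in $\widehat{\mathbb{H}}_0^{1,2}(G)$. Since \eqref{ff1} holds with probability one, for $\mathbb{P}$-almost every $\omega$ the datum $\bld{f}(\omega,\cdot)$ belongs to $\mathbf{H}_0^{-1,2}(G)$ and the deterministic theorem yields a solution $\bld{u}(\omega,\cdot)\in\widehat{\mathbf{H}}_0^{1,2}(G)$; because the forms in \eqref{ff1} are all continuous in the test argument with respect to $|\cdot|_{1,2}$ (using \eqref{cont-a0} for the trilinear term), the identity extends by density from $\mathcal{D}(G)$ to all of $\widehat{\mathbf{H}}_0^{1,2}(G)$.

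First I would establish the a priori bound by testing \eqref{ff1} with $\bld{\varphi}=\bld{u}$. Property \eqref{CP} kills the nonlinear term, $\mathfrak{a}(\bld{u},\bld{u},\bld{u})=0$, leaving $\nu|\bld{u}|_{1,2}^2=-\langle\bld{f},\bld{u}\rangle_1$, and the duality estimate $|\langle\bld{f},\bld{u}\rangle_1|\le|\bld{f}|_{-1,2}\,|\bld{u}|_{1,2}$ gives $\nu|\bld{u}|_{1,2}\le|\bld{f}|_{-1,2}$ path-wise. Squaring and taking expectations, together with $\bld{f}\in\mathbb{H}_0^{-1,2}(G)$, shows $\mathbb{E}|\bld{u}|_{1,2}^2\le\nu^{-2}\,\mathbb{E}|\bld{f}|_{-1,2}^2<\infty$, so $\bld{u}\in\widehat{\mathbb{H}}_0^{1,2}(G)$ as soon as measurability is known.

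Next, for uniqueness under \eqref{small-f}, I would take two solutions $\bld{u},\bld{v}$, set $\bld{w}=\bld{u}-\bld{v}$, subtract the weak formulations, and test with $\bld{\varphi}=\bld{w}$ to obtain $\nu|\bld{w}|_{1,2}^2=-\big(\mathfrak{a}(\bld{u},\bld{u},\bld{w})-\mathfrak{a}(\bld{v},\bld{v},\bld{w})\big)$. Estimate \eqref{cont-a-diff1} applies since $\bld{v}\in\widehat{\mathbf{H}}_0^{1,2}(G)$, and the first a priori bound combined with \eqref{small-f} forces $|\bld{u}|_{1,2}\le 2\theta\nu/\sqrt{|G|}$, so that $\nu|\bld{w}|_{1,2}^2\le\theta\nu|\bld{w}|_{1,2}^2$. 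Since $\theta<1$ this gives $|\bld{w}|_{1,2}=0$ and hence $\bld{u}=\bld{v}$ almost surely; the quantitative bound \eqref{small-u} is then immediate from the first a priori bound and \eqref{small-f}.

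The main obstacle is the measurability of $\omega\mapsto\bld{u}(\omega,\cdot)$, the one point where a purely deterministic citation is not enough. Under \eqref{small-f} the path-wise solution is unique, and I would realize it as the almost-sure $|\cdot|_{1,2}$-limit of finite-dimensional Galerkin approximations whose coefficient vectors solve an algebraic system depending continuously, hence measurably, on $\bld{f}(\omega,\cdot)$; uniqueness upgrades this to a genuinely measurable $\widehat{\mathbb{H}}_0^{1,2}(G)$-valued map. For the general existence statement, where uniqueness may fail, I would instead invoke a measurable selection theorem of Kuratowski--Ryll-Nardzewski type applied to the (closed, nonempty) path-wise solution set, or equivalently a measurable choice of convergent Galerkin subsequences, to produce a measurable solution satisfying the same a priori bound.
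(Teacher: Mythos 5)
Your proposal is correct and follows essentially the same route as the paper, whose entire proof consists of applying the deterministic results \cite[Theorems IX.2.1 and IX.3.2]{ga} path-wise to \eqref{10}; your a priori bound via testing with $\bld{u}$ and \eqref{CP}, and your uniqueness argument via \eqref{cont-a-diff1} under \eqref{small-f}, are exactly the content of those cited theorems. The only addition is your explicit treatment of measurability of $\omega\mapsto\bld{u}(\omega,\cdot)$, a point the paper leaves implicit in the citation but which you handle correctly (uniqueness plus Galerkin limits in the small-data case, measurable selection in general).
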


Intuitively, once we know the velocity field
 $\bld{u}$, we should be able to
  recover pressure $\, p\, $ from the original
equation \eqref{10}. Lemma \ref{le2} confirms this intuition; see also
\cite[Proposition I.1.1]{temam}. As a result, in what follows, we only consider the function $\bld{u}$.

Sometimes it is convenient to work with alternative characterizations
of the solution of \eqref{10}.

\begin{proposition}
\label{p-sol-10}
Let $\mathbf{f\in }\mathbb{H}_{0}^{-1,2}\left( G\right) $,
 $\bld{u}\in \widehat{\mathbb{H}}_{0}^{1,2}\left( G\right)$,
 and let   $\mathfrak{a}$ be the tri-linear form \eqref{form-a}.
  Then $\bld{u}$ is
 a solution to \eqref{10} if and only of,
for every $\bld{w}\in \widehat{\mathbb{H}}_{0}^{1,2}\left( G\right)$,
\begin{equation}
\label{ff1-w}
\mathbb{P}\Big(\,\nu\, \big(\nabla \bld{u},\nabla \bld{w}\big)_0
 +
 \mathfrak{a}(\bld{u},\bld{u},\bld{w})
  =
  -\left\langle \bld{f},\bld{w }\right\rangle _{1}\Big) =1,
\end{equation}
or
\begin{equation}
\label{ff1-w-E}
\,\nu\,\mathbb{E} \big(\nabla \bld{u},\nabla \bld{w}\big)_0
 +
 \mathbb{E}\mathfrak{a}(\bld{u},\bld{u},\bld{w})
  =
  -\mathbb{E}\left\langle \bld{f},\bld{w }\right\rangle _{1}.
\end{equation}
\end{proposition}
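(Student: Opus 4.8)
The plan is to read the statement as the equivalence of three characterizations of $\bld{u}$: the defining one, \eqref{ff1} against every \emph{deterministic} $\bld{\varphi}\in\mathcal{D}(G)$; the pathwise identity \eqref{ff1-w} against every \emph{random} test field $\bld{w}\in\widehat{\mathbb{H}}_0^{1,2}(G)$; and the averaged identity \eqref{ff1-w-E}. I would establish $\eqref{ff1}\Leftrightarrow\eqref{ff1-w}$ by a density-and-continuity argument that upgrades the countable family of almost-sure identities coming from the definition into a single almost-sure identity valid simultaneously for all test fields, and then $\eqref{ff1-w}\Leftrightarrow\eqref{ff1-w-E}$ by taking expectations and, conversely, by localizing with indicator test fields.

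For $\eqref{ff1}\Rightarrow\eqref{ff1-w}$, fix $\omega$ outside the null set on which $|\bld{u}(\omega)|_{1,2}=\infty$ or $|\bld{f}(\omega)|_{-1,2}=\infty$. For such $\omega$ the map $\bld{\psi}\mapsto \nu(\nabla\bld{u},\nabla\bld{\psi})_0+\mathfrak{a}(\bld{u},\bld{u},\bld{\psi})+\langle\bld{f},\bld{\psi}\rangle_1$ is a bounded (affine-linear) functional on $\widehat{\mathbf{H}}_0^{1,2}(G)$: the first and third terms are controlled by Cauchy--Schwarz and by the duality bound $|\langle\bld{f},\bld{\psi}\rangle_1|\le|\bld{f}|_{-1,2}|\bld{\psi}|_{1,2}$, while the nonlinear term is controlled by \eqref{cont-a0}, which gives $|\mathfrak{a}(\bld{u},\bld{u},\bld{\psi})|\le \tfrac{\sqrt{|G|}}{2}|\bld{u}|_{1,2}^2|\bld{\psi}|_{1,2}$ and is precisely the estimate that keeps the quadratic term continuous in the test field. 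Since $\widehat{\mathbf{H}}_0^{1,2}(G)$ is separable, I would pick a countable $\{\bld{\varphi}_k\}\subset\mathcal{D}(G)$ dense in it; by \eqref{ff1} the functional vanishes at each $\bld{\varphi}_k$ outside a null set $N_k$, so outside $\bigcup_k N_k$ it vanishes on a dense set and hence, by continuity, identically on $\widehat{\mathbf{H}}_0^{1,2}(G)$. Substituting the value $\bld{w}(\omega)$ of a strongly measurable $\bld{w}\in\widehat{\mathbb{H}}_0^{1,2}(G)$ then yields \eqref{ff1-w}. The reverse implication is immediate because $\mathcal{D}(G)\subset\widehat{\mathbb{H}}_0^{1,2}(G)$.

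For $\eqref{ff1-w-E}\Rightarrow\eqref{ff1-w}$, I would apply \eqref{ff1-w-E} to the test field $\mathbf{1}_A\,\bld{\varphi}$ with $A\in\mathcal{F}$ and deterministic $\bld{\varphi}\in\mathcal{D}(G)$ (which lies in $\widehat{\mathbb{H}}_0^{1,2}(G)$ since $\mathbb{E}|\mathbf{1}_A\bld{\varphi}|_{1,2}^2=\mathbb{P}(A)|\bld{\varphi}|_{1,2}^2<\infty$); by linearity of all three terms in the test field this reads $\mathbb{E}\big[\mathbf{1}_A\,\Psi_{\bld{\varphi}}\big]=0$, where $\Psi_{\bld{\varphi}}=\nu(\nabla\bld{u},\nabla\bld{\varphi})_0+\mathfrak{a}(\bld{u},\bld{u},\bld{\varphi})+\langle\bld{f},\bld{\varphi}\rangle_1$ is integrable for fixed deterministic $\bld{\varphi}$. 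As this holds for every $A\in\mathcal{F}$, we get $\Psi_{\bld{\varphi}}=0$ almost surely, and the density argument above again produces a single full-probability set carrying the identity for all test fields. In the opposite direction $\eqref{ff1-w}\Rightarrow\eqref{ff1-w-E}$ one takes expectations in the almost-sure identity: the linear terms are integrable by Cauchy--Schwarz since $\bld{u},\bld{w}\in L^2(\Omega;\cdot)$ and $\bld{f}\in\mathbb{H}_0^{-1,2}(G)$, while \eqref{cont-a0} reduces integrability of $\mathbb{E}\,\mathfrak{a}(\bld{u},\bld{u},\bld{w})$ to finiteness of $\mathbb{E}\big[|\bld{u}|_{1,2}^2|\bld{w}|_{1,2}\big]$.

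The main obstacle is the uniformization of null sets: the definition supplies, for each individual test field, an identity valid only off its own null set, and these must be amalgamated into one exceptional set that serves the entire (uncountable) space of test fields, including random ones. This is exactly where separability of $\widehat{\mathbf{H}}_0^{1,2}(G)$ and continuity of all three terms in the test field are used, with the trilinear bound \eqref{cont-a0} being the essential ingredient. The only other point requiring attention is the integrability of the nonlinear term in the passage from the pathwise form to the averaged form: the finiteness of $\mathbb{E}\big[|\bld{u}|_{1,2}^2|\bld{w}|_{1,2}\big]$ is automatic for (essentially) bounded $\bld{w}$—in particular under the smallness condition \eqref{small-f}, where \eqref{small-u} makes $|\bld{u}|_{1,2}$ bounded—but in general is the one place where the moments of $\bld{u}$ and $\bld{w}$ must be matched.
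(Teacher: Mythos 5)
Your proof is correct and follows essentially the same route as the paper: the forward implications are immediate (modulo the null-set uniformization you spell out), and the converse $\eqref{ff1-w-E}\Rightarrow\eqref{ff1}$ is obtained by testing against $\bld{w}=\bld{\varphi}\,\zeta$ with $\bld{\varphi}\in\mathcal{D}(G)$ and a bounded random variable $\zeta$ --- your indicator fields $\mathbf{1}_A\,\bld{\varphi}$ are exactly this choice. The paper's proof is a two-line sketch, and you merely supply the density, measurability, and integrability details it leaves implicit.
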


\begin{proof}
By construction,
$$
\eqref{ff1}\ \Rightarrow\ \eqref{ff1-w}\ \Rightarrow\ \eqref{ff1-w-E}.
$$
To establish $ \eqref{ff1-w-E}\, \Rightarrow\, \eqref{ff1}$, take
$\bld{w}=\bld{\varphi}\, \zeta$ with $\bld{\varphi}\in \mathcal{D}(G)$
and a bounded  random variable $\zeta$.
\end{proof}

\begin{corollary}
Let  $\bld{f}, \bld{g} \in \mathbb{H}_{0}^{-1,2}\left( G\right)$ and
let  $\bld{u}, \bld{v}\in \widehat{\mathbb{H}}_{0}^{1,2}\left( G\right)$ be the
corresponding solutions of \eqref{10}. If \eqref{small-f} holds, then
 \begin{equation}
 \label{difference-0}
\mathbb{P}\left( | \bld{u}-\bld{v}|_{1,2}\leq \frac{|\bld{f}-\bld{g}|_{-1,2}}{\nu(1-\theta)}\right)=1.
 \end{equation}

\end{corollary}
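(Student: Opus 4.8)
The plan is to work pathwise, exploiting the characterization \eqref{ff1-w} of both solutions. Since $\bld{u}$ solves \eqref{10} with forcing $\bld{f}$ and $\bld{v}$ solves it with forcing $\bld{g}$, writing \eqref{ff1-w} for each and subtracting gives, with probability one and for every $\bld{w}\in\widehat{\mathbb{H}}_0^{1,2}(G)$,
$$
\nu\,\big(\nabla(\bld{u}-\bld{v}),\nabla\bld{w}\big)_0+\mathfrak{a}(\bld{u},\bld{u},\bld{w})-\mathfrak{a}(\bld{v},\bld{v},\bld{w})=-\big\langle\bld{f}-\bld{g},\bld{w}\big\rangle_1.
$$
The natural test field is $\bld{w}=\bld{u}-\bld{v}$, which is admissible because $\bld{u},\bld{v}\in\widehat{\mathbb{H}}_0^{1,2}(G)$ forces $\bld{u}-\bld{v}$ into the same space, and with this choice the viscous term becomes $\nu\,|\bld{u}-\bld{v}|_{1,2}^2$.

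The crux is to absorb the nonlinear difference $\mathfrak{a}(\bld{u},\bld{u},\bld{u}-\bld{v})-\mathfrak{a}(\bld{v},\bld{v},\bld{u}-\bld{v})$ into the viscous term. Here I would invoke exactly the estimate \eqref{cont-a-diff1}, which applies since $\bld{v}\in\widehat{\mathbb{H}}_0^{1,2}(G)$ and bounds this difference by $\tfrac{\sqrt{|G|}}{2}\,|\bld{u}-\bld{v}|_{1,2}^2\,|\bld{u}|_{1,2}$. The point of using this particular form of the trilinear estimate, keyed to $|\bld{u}|_{1,2}$ rather than $|\bld{v}|_{1,2}$, is that the hypothesis \eqref{small-f} is a condition on $\bld{f}$; hence Theorem \ref{t1} (via \eqref{small-u}) supplies the a priori bound $|\bld{u}|_{1,2}\leq 2\nu\theta/\sqrt{|G|}$ with probability one, which turns the factor $\tfrac{\sqrt{|G|}}{2}\,|\bld{u}|_{1,2}$ into $\nu\theta$.

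It then remains to estimate the right-hand side by the duality bound $|\langle\bld{f}-\bld{g},\bld{u}-\bld{v}\rangle_1|\leq|\bld{f}-\bld{g}|_{-1,2}\,|\bld{u}-\bld{v}|_{1,2}$ and to rearrange. Combining the two estimates yields, with probability one, $\nu(1-\theta)\,|\bld{u}-\bld{v}|_{1,2}^2\leq|\bld{f}-\bld{g}|_{-1,2}\,|\bld{u}-\bld{v}|_{1,2}$; dividing by $|\bld{u}-\bld{v}|_{1,2}$ (the inequality being trivial when this vanishes) gives \eqref{difference-0}, using $1-\theta>0$ since $\theta\in(0,1)$. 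The only genuine obstacle is the absorption step: it works solely because the smallness assumption makes the effective nonlinear coefficient $\nu\theta$ strictly smaller than the viscous coefficient $\nu$, so that $\nu-\nu\theta>0$. Everything else is bookkeeping with probability-one statements, which is legitimate because all the pathwise inequalities above hold simultaneously on a single event of full measure.
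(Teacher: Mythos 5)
Your proposal is correct and follows essentially the same route as the paper: subtract the two weak formulations \eqref{ff1-w}, test with $\bld{w}=\bld{u}-\bld{v}$, absorb the nonlinear difference via \eqref{cont-a-diff1} together with the bound \eqref{small-u} on $|\bld{u}|_{1,2}$, and close with the duality estimate. The paper's proof is exactly this computation, so there is nothing to add.
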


\begin{proof}
By \eqref{ff1-w}, we have, with probability one,
 \begin{equation}
 \label{difference-1}
 \nu\, \big(\nabla (\bld{u}-\bld{v}),\nabla \bld{w}\big)_0
 +
 \mathfrak{a}(\bld{u},\bld{u},\bld{w})-\mathfrak{a}(\bld{v},\bld{v},\bld{w})
  =
  -\left\langle \bld{f}-\bld{g},\bld{w}\right\rangle _{1}.
\end{equation}
Taking $\bld{w}= \bld{u}-\bld{v}$ and using \eqref{cont-a-diff1}, we re-write \eqref{difference-1} as
$$
\nu| \bld{u}-\bld{v}|_{1,2}^2 - \frac{\sqrt{|G|}}{2}\,
|\bld{u}-\bld{v}|_{1,2}^2\,|\bld{u}|_{1,2}\leq \left\langle \bld{f}-\bld{g},\bld{u}-\bld{v}\right\rangle _{1}
\leq  |\bld{f}-\bld{g}|_{-1,2}\ |\bld{u}-\bld{v}|_{1,2},
$$
and then \eqref{difference-0} follows from \eqref{small-u}.

\end{proof}

\section{Analysis of the Stochastic Galerkin Approximation}
\label{sec2}

For an integer $N\geq 1$, consider the equation
\begin{eqnarray}
\label{30}
\nu \Delta \bld{v}_N &=&\mathcal{P}^{N}\Big((\bld{v}_N\cdot\nabla)\bld{v}_N\Big)
+\nabla p_N+
\mathcal{P}^{N}\bld{f},   \\
\mathrm{div}\,{}\bld{v}_N &=&0,\ \ \bld{v}_N\big|_{\partial G}=0.
\notag
\end{eqnarray}%
Recall that $\bld{f}$ is the random forcing of the form
 $$
 \bld{f}(x)=\mathbf{f}\left( \xi ,x\right) =
 \left( f^{1}\left(\xi ,x\right),f^{2}(\xi ,x)\right)
 $$
 with a  suitable (known) non-random  vector field $\mathbf{f}$ and a random variable $\xi$,
 and $\mathcal{P}^{N}$ is   the orthogonal projection in $L_2(\Omega ,\mathcal{F}_{\xi},\mathbb{P})$
   on the subspace spanned  by the first $N$ orthogonal polynomials corresponding to the distribution of $\xi$.

Similar to Definition \ref{d1}, we have
\begin{definition}
\label{d5}
Given  $\bld{f\in }\mathbb{H}_{0}^{-1,2}\left( G\right)$, a
random vector field
$\bld{v}_N \in \mathcal{P}^N\Big(\widehat{\mathbb{H}}_{0}^{1,2}
\left( G\right)\Big) $
 is called a solution of \eqref{30}, if, for every $\bld{\varphi}\in \mathcal{D}\left( G\right)$,
\begin{equation}
\label{ff7}
\mathbb{P}\Bigg(
\nu \, \big(\nabla \bld{v}_N,\nabla \bld{\varphi }\big)_0
 +
 \mathcal{P}^{N}\mathfrak{a}(\bld{v}_N,\bld{v}_N,\bld{\varphi })
 =-
 \left\langle \mathcal{P}^{N}%
\bld{f},\bld{\varphi }\right\rangle _{1}\Bigg)=1.
\end{equation}%
\end{definition}
We call $\bld{v}_N$
 a {\tt polynomial chaos approximation} of the solution
 $\bld{u}$ of equation \eqref{10}.

Similar to \eqref{ff1-w} and \eqref{ff1-w-E}, we will establish  two alternative  characterizations of the solution of
\eqref{30}.

If $\bld{u}\in \mathbb{H}_{0}^{1,2}
\left( G\right)$,  $\bld{v}\in \mathbb{H}_{0}^{1,2}
\left( G\right)$, and $\bld{w}\in  \mathcal{P}^N\Big(\mathbb{H}_{0}^{1,2}
\left( G\right)\Big)$, then
 $\mathcal{P}^N\bld{w}=\bld{w}$ and therefore
\begin{equation}
\label{eq:CP-pr}
\mathbb{E}\big(\mathcal{P}^N(\bld{u}\cdot\nabla)\bld{v},\bld{w}\big)
=\mathbb{E}\big((\bld{u}\cdot\nabla)\bld{v},\mathcal{P}^N\bld{w}\big)
=\mathbb{E}\,\mathfrak{a}(\bld{u},\bld{v},\bld{w}).
\end{equation}

As a result, direct computations lead to the first
 alternative characterization of the solution of \eqref{30}.
\begin{proposition}
\label{prop:equiv1}
A random vector field  $%
\bld{v}_N \in \mathcal{P}^N\Big(\widehat{\mathbb{H}}_{0}^{1,2}
\left( G\right)\Big) $
 is   a solution of \eqref{30} if and only if, for every
 $\bld{w}\in
 \mathcal{P}^N\Big(\widehat{\mathbb{H}}_{0}^{1,2}(G)\Big)$,
\begin{equation}
\label{NSE-Exp}
\nu\, \mathbb{E}\,
\big(\nabla \bld{v}_N,\nabla \bld{w}\big)
+
\mathbb{E}\,\mathfrak{a}(\bld{v}_N,\bld{v}_N,\bld{w})
=-
 \mathbb{E} \left\langle \bld{f},\bld{w}\right\rangle _{1}.
 \end{equation}
\end{proposition}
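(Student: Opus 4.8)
The plan is to prove both implications by exploiting two facts: the orthogonal projection $\mathcal{P}^N$ is self-adjoint on $L_2(\Omega)$ and acts as the identity on $\mathcal{P}^N\big(\widehat{\mathbb{H}}_0^{1,2}(G)\big)$ (this is precisely the content of \eqref{eq:CP-pr}), and test functions of the product form $\bld{\varphi}\,\zeta$, with $\bld{\varphi}\in\mathcal{D}(G)$ and $\zeta$ in the span of $\{\mathfrak{P}_0,\dots,\mathfrak{P}_N\}$, are rich enough to recover both the almost-sure identity \eqref{ff7} and the expectation identity \eqref{NSE-Exp}. Before starting, I would record the integrability that makes every manipulation legitimate: since $\bld{v}_N$ has a finite chaos expansion $\bld{v}_N=\sum_{k=0}^N\bld{v}_{N,k}\mathfrak{P}_k$ with deterministic $\bld{v}_{N,k}\in\widehat{\mathbf{H}}_0^{1,2}(G)$, the random variable $\mathfrak{a}(\bld{v}_N,\bld{v}_N,\bld{\varphi})$ is a finite combination of the products $\mathfrak{P}_j\mathfrak{P}_k$, each of which lies in $L_2(\Omega)$ by property \eqref{eq:Amnk}; together with $(\nabla\bld{v}_N,\nabla\bld{\varphi})_0\in L_2(\Omega)$ and $\langle\bld{f},\bld{\varphi}\rangle_1\in L_2(\Omega)$, this guarantees that all the pairings below are well defined and that $\mathcal{P}^N$ may be moved across an inner product.

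For the forward implication, I would fix $\bld{\varphi}\in\mathcal{D}(G)$ and $k\le N$, multiply the almost-sure identity \eqref{ff7} by $\mathfrak{P}_k$, and take expectations. Using $\mathcal{P}^N\mathfrak{P}_k=\mathfrak{P}_k$ and self-adjointness, each projection is transferred onto $\mathfrak{P}_k$ and disappears, while linearity of $\mathfrak{a}$ and of the gradient pairing in their last argument lets me absorb the factor $\mathfrak{P}_k$ into the test field, turning the relation into \eqref{NSE-Exp} for the single-mode test function $\bld{w}=\bld{\varphi}\,\mathfrak{P}_k$. Taking linear combinations then yields \eqref{NSE-Exp} for every $\bld{w}=\sum_{k=0}^N\bld{\varphi}_k\mathfrak{P}_k$ with $\bld{\varphi}_k\in\mathcal{D}(G)$; since $\mathcal{D}(G)$ is dense in $\widehat{\mathbf{H}}_0^{1,2}(G)$ and both sides of \eqref{NSE-Exp} depend continuously on $\bld{w}$ in $\widehat{\mathbb{H}}_0^{1,2}(G)$ (the nonlinear term being controlled by \eqref{cont-a0} and the finite fourth moment of $\bld{v}_N$), a density argument extends \eqref{NSE-Exp} to all $\bld{w}\in\mathcal{P}^N\big(\widehat{\mathbb{H}}_0^{1,2}(G)\big)$.

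For the converse, I would fix $\bld{\varphi}\in\mathcal{D}(G)$ and introduce the residual
$$
R=\nu\,(\nabla\bld{v}_N,\nabla\bld{\varphi})_0+\mathcal{P}^N\mathfrak{a}(\bld{v}_N,\bld{v}_N,\bld{\varphi})+\langle\mathcal{P}^N\bld{f},\bld{\varphi}\rangle_1 ,
$$
which, by the structure of its three summands, belongs to $\mathcal{P}^N L_2(\Omega)$. For an arbitrary $\zeta\in\mathcal{P}^N L_2(\Omega)$ the product $\bld{w}=\bld{\varphi}\,\zeta$ lies in $\mathcal{P}^N\big(\widehat{\mathbb{H}}_0^{1,2}(G)\big)$, so \eqref{NSE-Exp} applies; moving $\mathcal{P}^N$ onto $\zeta$ (where it acts trivially) by self-adjointness and using $\mathfrak{a}(\bld{v}_N,\bld{v}_N,\bld{\varphi})\,\zeta=\mathfrak{a}(\bld{v}_N,\bld{v}_N,\bld{w})$ shows that \eqref{NSE-Exp} is exactly the statement $\mathbb{E}[R\,\zeta]=0$. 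Choosing $\zeta=R$ then forces $\mathbb{E}[R^2]=0$, hence $R=0$ almost surely, which is \eqref{ff7}.

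The step I expect to be the main obstacle is the transfer of $\mathcal{P}^N$ across the quadratic term: this is legitimate only because $\mathfrak{a}(\bld{v}_N,\bld{v}_N,\bld{\varphi})$ is square-integrable, and that square-integrability is not automatic for a general nonlinearity but follows here from the finiteness of the chaos expansion of $\bld{v}_N$ combined with the product-closure property \eqref{eq:Amnk}. Everything else is bookkeeping once the self-adjointness of $\mathcal{P}^N$, encoded in \eqref{eq:CP-pr}, is in place.
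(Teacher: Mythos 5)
Your proof is correct and follows exactly the route the paper intends: the paper leaves Proposition \ref{prop:equiv1} as ``direct computations'' based on the self-adjointness and idempotence of $\mathcal{P}^N$ recorded in \eqref{eq:CP-pr}, combined with the product test functions $\bld{w}=\bld{\varphi}\,\zeta$ already used to prove Proposition \ref{p-sol-10}. Your write-up simply fills in the details (including the useful observation that $\mathfrak{a}(\bld{v}_N,\bld{v}_N,\bld{\varphi})\in L_2(\Omega)$ thanks to the finite chaos expansion and \eqref{eq:Amnk}, and the residual trick $\zeta=R$ to recover the almost-sure identity), so no changes are needed.
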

In particular, if a solution $\bld{v}_N$ exists, then, taking $\bld{w}= \bld{v}_N$ and using \eqref{CP}, we find
\begin{equation}
\label{vN-L2bnd}
\nu|\bld{v}_N|_{\mathbb{H}^{1,2}_0} \leq   |\bld{f}|_{\mathbb{H}^{-1,2}_0}.
\end{equation}

Equality \eqref{NSE-Exp} shows that  $\bld{v}_N$ is indeed  a
  {\tt stochastic Galerkin approximation}  of $\bld{u}$.

To derive yet another   form of \eqref{30}, start by writing
 $$
 \bld{v}_N=\sum_{l=0}^{N}\mathbf{v}^{l}_N\mathfrak{P}_{l},\qquad \mathcal{P}^N\bld{f}=\sum_{l=0}^N \mathbf{f}^l\mathfrak{P}_{l}.
 $$
Then,  using the numbers $A_{m,k;l}$ defined in  \eqref{eq:Amnk}, we compute
 \begin{equation}
 \label{conv-proj}
 \begin{split}
\Big((\bld{v}_N\cdot\nabla)\bld{v}_N\Big)
 &=\sum_{m,k=0}^{N}
  (\mathbf{v}^m_N\cdot\nabla)\mathbf{v}_N^k
  \mathfrak{P}_m\mathfrak{P}_k=
  \sum_{m,k=0}^{N}
  (\mathbf{v}^m_N\cdot\nabla)\mathbf{v}_N^k
  \sum_{l=0}^{m+k}A_{m,k;l}\mathfrak{P}_l\\
  &=
  \sum_{l=0}^{2N}
  \left(\sum_{m,n=0}^N A_{m,k;l}\,
(\mathbf{v}_N^m\cdot\nabla)
\mathbf{v}_N^k\right)\mathfrak{P}_l,
\end{split}
 \end{equation}
that is,
$$
\mathcal{P}^{N}\Big((\bld{v}_N\cdot\nabla)\bld{v}_N\Big)=
\sum_{l=0}^{N}
  \left(\sum_{m,n=0}^N A_{m,k;l}\,
(\mathbf{v}_N^m\cdot\nabla)
\mathbf{v}_N^k\right)\mathfrak{P}_l.
$$
  As a result,  \eqref{30} is equivalent to
 the following system of equations  for the non-random vector
   functions $\mathbf{v }^{l}_N$, $l=0,\ldots,N$:
\begin{equation}
\label{32}
\nu \Delta \mathbf{v}^{l}_N
=\sum_{m,n=0}^N A_{m,k;l}\,
(\mathbf{v}_N^k\cdot\nabla)
\mathbf{v}_N^m+\nabla p^{l}_N+\mathbf{f}^{l}.
\end{equation}
This system is more complicated than \eqref{10} and will require more sophisticated numerical procedures to compute a solution,
 whence the term ``intrusive'' in connection with  stochastic Galerkin approximation.

For example, if $\xi$ is a standard normal random variable and $\mathcal{F}=\sigma(\xi)$,  $\bld{f}(x)=\mathbf{f}\left( \xi ,x\right) =
 \left( f^{1}\left(\xi ,x\right),f^{2}(\xi ,x)\right)$
for a non-random  vector field $\mathbf{f}$, then $\mathfrak{P}_n=\mathrm{H}_n(\xi)$,
where
$$
\mathrm{H}_n(x)=(-1)^{n}e^{x^2/2}\frac{d^n e^{-x^2/2}}{dx^n}
$$
 is $n$-th Hermite polynomial, $c(n)=n!$, and
$$
\mathfrak{P}_m\mathfrak{P}_n=
\sum_{k=0}^{\min(m,n)}\frac{m!\,n!}{(m-k)!\,(n-k)!\,k!}\,
\mathfrak{P}_{m+n-2k}
$$
(cf. \cite[Formula (6.7)]{DXiu-book});
 after some algebraic manipulations,  \eqref{32} becomes
\begin{equation}
\label{32-G}
\nu \Delta \mathbf{v}_N^{l}=
\sum_{n=0}^{N}\frac{1}{n!}
\sum_{\substack{k+m=l,  \\ k+n\leq N,m+n\leq N}}
\frac{(k+n)!}{k!}\frac{(m+n)!}{m!}
\,(\mathbf{v}^{k+n}_N\cdot\nabla)\mathbf{v}^{m+n}_N
+\nabla p^{l}_N+\mathbf{f}^{l}.
\end{equation}

Combining \eqref{32} with Proposition \ref{prop:equiv1}, we get the
second alternative characterization of the solution of \eqref{30}.

\begin{proposition}
\label{prop:equiv2}
A collection of functions
$ \mathbf{v}^l_N,\ l=0,\ldots,N,$
  with each  $\mathbf{v}^l_N\in \widehat{\mathbf{H}}_{0}^{1,2}(G)$,
  is a solution of \eqref{32} if and only if,  for every collection of functions
 $\{\mathbf{w}^{l},\ l=0,\ldots,N\}$,
$\mathbf{w}^{l} \in \mathcal{D}\left( G\right)$, the following
equality holds:
\begin{equation}
\label{320}
\begin{split}
\nu \sum_{l=0}^{N}c(l)\,
\big(\nabla \mathbf{v}^{l}_N,\nabla \mathbf{w}^{l}\big)_0
&+\sum_{l=0}^N c(l)
\sum_{m,n=0}^N A_{m,k;l}\,
\mathfrak{a}(\mathbf{v}_N^k,\mathbf{v}_N^m,\mathbf{w}^l)\\
&=-
\sum_{l=0}^{N}c\left(
l\right) \left\langle \mathbf{f}^{l},\mathbf{w}^{l}\right\rangle _{1},\quad c(l)=\mathbb{E}\mathfrak{P}_l^2.
\end{split}
\end{equation}

\end{proposition}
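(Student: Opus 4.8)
The plan is to reduce everything to the variational identity \eqref{NSE-Exp} already established in Proposition \ref{prop:equiv1}. Given a collection $\{\mathbf{v}^l_N\}_{l=0}^N$ with each $\mathbf{v}^l_N\in\widehat{\mathbf{H}}^{1,2}_0(G)$, I would form the reconstruction $\bld{v}_N=\sum_{l=0}^N\mathbf{v}^l_N\mathfrak{P}_l$ and observe that this sets up a bijection between such collections and the elements of $\mathcal{P}^N\big(\widehat{\mathbb{H}}^{1,2}_0(G)\big)$. Since the passage from \eqref{30} to \eqref{32} has already been carried out in the text via \eqref{conv-proj}, the collection $\{\mathbf{v}^l_N\}$ solves \eqref{32} if and only if $\bld{v}_N$ solves \eqref{30}, which by Proposition \ref{prop:equiv1} holds if and only if \eqref{NSE-Exp} is satisfied for every $\bld{w}\in\mathcal{P}^N\big(\widehat{\mathbb{H}}^{1,2}_0(G)\big)$. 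Thus it suffices to match \eqref{NSE-Exp} against \eqref{320}.

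The core computation is the term-by-term expansion obtained by writing the test field as $\bld{w}=\sum_{l=0}^N\mathbf{w}^l\mathfrak{P}_l$. For the viscous term, orthogonality $\mathbb{E}\big(\mathfrak{P}_i\mathfrak{P}_j\big)=c(i)\delta_{ij}$ gives
\begin{equation*}
\nu\,\mathbb{E}\big(\nabla\bld{v}_N,\nabla\bld{w}\big)_0=\nu\sum_{l=0}^N c(l)\,\big(\nabla\mathbf{v}^l_N,\nabla\mathbf{w}^l\big)_0,
\end{equation*}
and the same orthogonality together with $\mathcal{P}^N\bld{f}=\sum_l\mathbf{f}^l\mathfrak{P}_l$ yields $\mathbb{E}\langle\bld{f},\bld{w}\rangle_1=\sum_l c(l)\langle\mathbf{f}^l,\mathbf{w}^l\rangle_1$. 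For the trilinear term, I would expand all three arguments and invoke $\mathbb{E}\big(\mathfrak{P}_k\mathfrak{P}_m\mathfrak{P}_l\big)=c(l)A_{m,k;l}$, the defining relation for the coefficients in \eqref{eq:Amnk}, to obtain
\begin{equation*}
\mathbb{E}\,\mathfrak{a}(\bld{v}_N,\bld{v}_N,\bld{w})=\sum_{l=0}^N c(l)\sum_{m,k=0}^N A_{m,k;l}\,\mathfrak{a}(\mathbf{v}^k_N,\mathbf{v}^m_N,\mathbf{w}^l).
\end{equation*}
Summing the three contributions shows that \eqref{NSE-Exp}, evaluated on such a $\bld{w}$, is precisely \eqref{320}.

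This immediately gives the forward implication. For the converse, \eqref{320} is assumed only for $\mathbf{w}^l\in\mathcal{D}(G)$, so it recovers \eqref{NSE-Exp} only for test fields $\bld{w}=\sum_l\mathbf{w}^l\mathfrak{P}_l$ whose coefficients lie in $\mathcal{D}(G)$; I would then extend to all $\bld{w}\in\mathcal{P}^N\big(\widehat{\mathbb{H}}^{1,2}_0(G)\big)$ by density of $\mathcal{D}(G)$ in $\widehat{\mathbf{H}}^{1,2}_0(G)$ (which holds by the very definition of the latter space) together with the continuity of each term, the trilinear one being controlled by \eqref{cont-a0}. With \eqref{NSE-Exp} then in hand for all admissible $\bld{w}$, Proposition \ref{prop:equiv1} returns \eqref{30}, hence \eqref{32}. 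I expect the main obstacle to be this density-and-continuity passage in the converse direction, together with the bookkeeping needed to keep the finiteness in \eqref{eq:Amnk} and the truncation induced by $\mathcal{P}^N$ consistent across all three terms; the algebra itself is routine once the triple-product identity is invoked.
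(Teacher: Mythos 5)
Your proposal is correct and follows essentially the same route as the paper, which treats Proposition \ref{prop:equiv2} as a direct consequence of the derivation of \eqref{32} from \eqref{30} via \eqref{conv-proj} together with Proposition \ref{prop:equiv1}; your term-by-term expansion using $\mathbb{E}\big(\mathfrak{P}_k\mathfrak{P}_m\mathfrak{P}_l\big)=c(l)A_{m,k;l}$ is exactly the identity \eqref{A=Ea} that the paper records right after the statement. The density-and-continuity step you flag for the converse is routine and consistent with how the paper passes between $\mathcal{D}(G)$ test functions and $\widehat{\mathbf{H}}^{1,2}_0(G)$ elsewhere.
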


Given $\bar{\mathbf{u}}$, $\bar{\mathbf{v}}$,
$\bar{\mathbf{w}}$ in $\big(\mathbf{H}^{1,2}_0(G)\big)^{N+1}$,
with $\bar{\mathbf{u}}=(\mathbf{u}^0,\ldots,\mathbf{u}^N)$
and similarly for $\bar{\mathbf{v}}, \bar{\mathbf{w}}$,
define
\begin{equation}
\label{A}
\mathfrak{A}(
\bar{\mathbf{u}},
\bar{\mathbf{v}},
\bar{\mathbf{w}})
=\sum_{l=0}^N c(l)
\sum_{m,n=0}^N A_{m,k;l}\,
\mathfrak{a}(\mathbf{u}^k,\mathbf{v}^m,\mathbf{w}^l).
\end{equation}
Then we can re-write \eqref{320} as
\begin{equation}
\label{320-s}
\nu \sum_{l=0}^{N}c(l)\, \big(\nabla\mathbf{v}^{l}_N,
\nabla \mathbf{w}^{l}\big)_0
+
\mathfrak{A}(
\bar{\mathbf{v}}_{{}_N},
\bar{\mathbf{v}}_{{}_N},
\bar{\mathbf{w}})
= -
\sum_{l=0}^{N}
c\left(l\right)
 \left\langle \mathbf{f}^{l},\mathbf{w}^{l}\right\rangle _{1}.
\end{equation}%

Furthermore, given $\bar{\mathbf{u}}$, $\bar{\mathbf{v}}$,
$\bar{\mathbf{w}}$ in $\big(\mathbf{H}^{1,2}_0(G)\big)^{N+1}$,
define
$$
{\bld{u}}=\sum_{k=0}^N \mathbf{u}^k\mathfrak{P}_k,\ \
{\bld{v}}=\sum_{k=0}^N \mathbf{v}^k\mathfrak{P}_k,\ \
{\bld{w}}=\sum_{k=0}^N \mathbf{w}^k\mathfrak{P}_k.
$$
Then equality \eqref{eq:CP-pr} implies
\begin{equation}
\label{A=Ea}
\mathfrak{A}(
\bar{\mathbf{u}},
\bar{\mathbf{v}},
\bar{\mathbf{w}})=\mathbb{E}\mathfrak{a}(\bld{u},\bld{v},\bld{w}).
\end{equation}
In particular, by \eqref{CP},
\begin{equation}
\label{Exp-cancel}
\mathfrak{A}(
\bar{\mathbf{u}},
\bar{\mathbf{v}},
\bar{\mathbf{v}})=0
\end{equation}
provided $\mathbf{u}^k\in \widehat{\mathbf{H}}^{1,2}_0(G)$
 for all $k=0,\ldots, N$.

We now use \eqref{320} to
establish a basic solvability result for equation \eqref{30}.
\begin{theorem}
\label{pro1}
For every  $\bld{f}\in\mathbb{H}_{0}^{-1,2}\left( G\right) $ and
 $N\geq 1$, equation \eqref{30} has a solution $\bld{v}_N$ and
\begin{equation}
 \label{9}
\left\vert \bld{v}_N\right\vert_{\mathbb{H}^{1,2}_0}  \leq
\frac{\left\vert
\bld{f}\right\vert _{\mathbb{H}^{-1,2}_0}}{\nu}.
\end{equation}
The solution is unique if there exists a non-random number $\varepsilon_N\in (0,1)$ such that
\begin{equation}
\label{Proj-US}
\mathbb{P}\Big( \vert \bld{v}_N\vert_{1,2}\leq
\frac{2\nu(1-\varepsilon_N)}{\sqrt{|G|}}\Big)=1.
\end{equation}
\end{theorem}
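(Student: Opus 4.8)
\section*{Proof proposal}

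The plan is to treat the two assertions separately: existence together with the bound \eqref{9} is obtained by reducing \eqref{30} to a \emph{deterministic} stationary Navier--Stokes problem on a product Hilbert space and running the classical Galerkin argument, while uniqueness follows from a direct energy estimate exploiting the cancellation property \eqref{CP} and the smallness hypothesis \eqref{Proj-US}.

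For existence, I would first record that, by Propositions \ref{prop:equiv1} and \ref{prop:equiv2}, finding a solution of \eqref{30} is equivalent to finding $\bar{\mathbf{v}}_N=(\mathbf{v}_N^0,\ldots,\mathbf{v}_N^N)$ satisfying \eqref{320-s} in the Hilbert space $\mathcal{H}=\big(\widehat{\mathbf{H}}^{1,2}_0(G)\big)^{N+1}$ with inner product $\langle\bar{\mathbf{u}},\bar{\mathbf{v}}\rangle_{\mathcal{H}}=\sum_{l=0}^N c(l)\,\big(\nabla\mathbf{u}^l,\nabla\mathbf{v}^l\big)_0$. In this formulation the viscosity term is the coercive quadratic form of $\mathcal{H}$, the right-hand side is a bounded linear functional on $\mathcal{H}$, and the nonlinearity is the trilinear form $\mathfrak{A}$ of \eqref{A}, which is continuous (being a finite sum of terms $\mathfrak{a}(\mathbf{u}^k,\mathbf{v}^m,\mathbf{w}^l)$, each controlled by \eqref{cont-a0}) and, by \eqref{Exp-cancel}, satisfies $\mathfrak{A}(\bar{\mathbf{u}},\bar{\mathbf{v}},\bar{\mathbf{v}})=0$. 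This is exactly the abstract structure underlying \cite[Theorem IX.2.1]{ga}. Concretely, I would fix a basis of $\widehat{\mathbf{H}}^{1,2}_0(G)$ (e.g.\ the eigenfunctions of the Stokes operator), approximate each component $\mathbf{v}_N^l$ in the span of its first $M$ elements, and solve the resulting finite-dimensional nonlinear system via Brouwer's theorem; testing with $\bar{\mathbf{w}}=\bar{\mathbf{v}}_N$ and using \eqref{Exp-cancel} yields the estimate $\nu|\bld{v}_N|_{\mathbb{H}^{1,2}_0}\le|\bld{f}|_{\mathbb{H}^{-1,2}_0}$, which bounds the approximants uniformly in $M$ and makes the fixed-point argument applicable. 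The same bound is inherited in the limit and gives \eqref{9}, which is just \eqref{vN-L2bnd}.

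The main obstacle is passing to the limit in the nonlinear term $\mathfrak{A}(\bar{\mathbf{v}}_N,\bar{\mathbf{v}}_N,\bar{\mathbf{w}})$ along a weakly convergent subsequence of Galerkin approximants. Here I would use that in the planar domain $G$ the embedding $\mathbf{H}^{1,2}_0(G)\hookrightarrow\mathbf{L}^4(G)$ is compact, so that weak convergence in $\mathcal{H}$ upgrades to strong $\mathbf{L}^4$ convergence of each component; combined with the H\"older bound \eqref{cont-a1} for $q=r=4$, this suffices to identify the limit of each $\mathfrak{a}(\mathbf{v}_N^k,\mathbf{v}_N^m,\mathbf{w}^l)$ and hence of $\mathfrak{A}$.

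For uniqueness, suppose $\bld{v}_N$ and $\tilde{\bld{v}}_N$ are two solutions in $\mathcal{P}^N\big(\widehat{\mathbb{H}}^{1,2}_0(G)\big)$ and set $\bld{w}=\bld{v}_N-\tilde{\bld{v}}_N$, an admissible test field in \eqref{NSE-Exp}. Subtracting the two instances of \eqref{NSE-Exp} and using the algebraic identity $\mathfrak{a}(\bld{v}_N,\bld{v}_N,\bld{w})-\mathfrak{a}(\tilde{\bld{v}}_N,\tilde{\bld{v}}_N,\bld{w})=\mathfrak{a}(\bld{w},\bld{v}_N,\bld{w})+\mathfrak{a}(\tilde{\bld{v}}_N,\bld{w},\bld{w})$ together with the pathwise cancellation \eqref{CP} for $\tilde{\bld{v}}_N\in\widehat{\mathbb{H}}^{1,2}_0(G)$, I obtain
\[
\nu\,|\bld{w}|_{\mathbb{H}^{1,2}_0}^2=-\,\mathbb{E}\,\mathfrak{a}(\bld{w},\bld{v}_N,\bld{w}).
\]
Estimating the right-hand side pathwise by \eqref{cont-a0} and invoking \eqref{Proj-US} gives $|\mathfrak{a}(\bld{w},\bld{v}_N,\bld{w})|\le\frac{\sqrt{|G|}}{2}\,|\bld{v}_N|_{1,2}\,|\bld{w}|_{1,2}^2\le\nu(1-\varepsilon_N)\,|\bld{w}|_{1,2}^2$ almost surely, so after taking expectations $\nu|\bld{w}|_{\mathbb{H}^{1,2}_0}^2\le\nu(1-\varepsilon_N)|\bld{w}|_{\mathbb{H}^{1,2}_0}^2$, that is, $\nu\varepsilon_N|\bld{w}|_{\mathbb{H}^{1,2}_0}^2\le0$. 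Since $\nu>0$ and $\varepsilon_N\in(0,1)$, this forces $\bld{w}=0$, and uniqueness follows.
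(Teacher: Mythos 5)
Your proposal is correct and follows essentially the same route as the paper: existence via a finite-dimensional Galerkin system in $\big(\widehat{\mathbf{H}}^{1,2}_0(G)\big)^{N+1}$ solved by a Brouwer-type argument, the a priori bound \eqref{9} from testing with the solution itself and the cancellation \eqref{Exp-cancel}, a compactness passage to the limit as in \cite[pp.~600--601]{ga}, and uniqueness by the energy estimate combining \eqref{CP}, \eqref{cont-a0}, and \eqref{Proj-US}. The only cosmetic difference is that you invoke compactness of the embedding into $\mathbf{L}^4(G)$ to pass to the limit in the nonlinearity, whereas the paper cites compactness of $\mathbf{H}^{1,2}_0(G)\subset\mathbf{L}^2(G)$ and defers to Galdi; both are standard and equivalent here.
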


\begin{proof}
For  $M\geq 1$ and $l=0,\ldots, N$, define
\begin{equation}
\label{vMNl}
\mathbf{v}_{{}_{M,N}}^l=\sum_{k=0}^{M}z^{k,l}_{{}_{M,N}}
\mathbf{h}_k,
\end{equation}
where $z^{k,l}_{{}_{M,N}}\in \mathbb{R}$ and the
functions $\mathbf{h}_k$ have the following properties:
\begin{enumerate}
\item $\mathbf{h}_k\in \widehat{\mathbf{H}}^{1,2}_0(G)$, $k\geq 0$;
\item Finite linear combinations of $\mathbf{h}_k$ are dense in
the space $\widehat{\mathbf{H}}^{1,2}_0(G)$;
\item $|\mathbf{h}_k|_{\mathbf{L}^2}=1,\
(\mathbf{h}_k,\mathbf{h}_m)=0,\ k\not=m$.
\end{enumerate}
A possible choice is the normalized eigenfunctions of the Stokes operator \cite[Section I.2.6]{temam}.

Also, we will use the notations
$$
\bar{\mathbf{v}}_{{}_{M,N}}=
(\mathbf{v}_{{}_{M,N}}^0,\ldots, \mathbf{v}_{{}_{M,N}}^N),\ \ \
\bld{v}_{{}_{M,N}}=
\sum_{l=0}^N\mathbf{v}_{{}_{M,N}}^l\mathfrak{P}_l.
$$
Consider the system of equations
\begin{equation}
\label{GA1}
\nu \big(\nabla \mathbf{v}^{l}_{{}_{M,N}},\nabla\mathbf{h}_k\big)
+\sum_{m,n=0}^N A_{m,n;l}\,
\big((\mathbf{v}_{{}_{M,N}}^m\cdot\nabla)
\mathbf{v}_{{}_{M,N}}^n,\mathbf{h}_k\big)
+\langle\mathbf{f}^{l},\mathbf{h}_k\rangle_{1}=0,
\end{equation}
$k=0,\ldots, M,\ l=0,\ldots, N$; with in mind, we think of \eqref{GA1} as a system of equations  for the
numbers $z^{k,l}_{{}_{M,N}}$.

To show that \eqref{GA1} has a solution for every
$M\geq 1$, we introduce the following notations:
\begin{align*}
\bar{\mathbf{z}}&=\big(z^{0,0}_{{}_{M,N}}, \ldots, z^{M,0}_{{}_{M,N}},z^{0,1}_{{}_{M,N}}\ldots,
z^{M,1}_{{}_{M,N}},
\ldots,
z^{0,N}_{{}_{M,N}},\ldots,z^{M,N}_{{}_{M,N}}\big),\\
Q_{k,l}(\bar{\mathbf{z}})&=
\nu \big(\nabla \mathbf{v}^{l}_{{}_{M,N}},\nabla\mathbf{h}_k\big)
+\sum_{m,n=0}^N A_{m,n;l}\,
\big((\mathbf{v}_{{}_{M,N}}^m\cdot\nabla)
\mathbf{v}_{{}_{M,N}}^n,\mathbf{h}_k\big)
+\langle\mathbf{f}^{l},\mathbf{h}_k\rangle_{1},\\
F(\bar{\mathbf{z}})&=
\sum_{k,l} c(l)Q_{k,l}(\bar{\mathbf{z}})z^{k,l}_{{}_{M,N}}.
\end{align*}
Combining  \eqref{vMNl},   \eqref{320-s},  and \eqref{Exp-cancel},
\begin{equation}
\label{ES-1}
\begin{split}
F(\bar{\mathbf{z}})&=
\nu \sum_{l=0}^{N}c(l)\,|\mathbf{v}^{l}_{{}_{M,N}}|_{1,2}^2
+
\mathfrak{A}(
\bar{\mathbf{v}}_{{}_{M,N}},
\bar{\mathbf{v}}_{{}_{M,N}},
\bar{\mathbf{v}}_{{}_{M,N}})
+
\sum_{l=0}^{N}
c\left(l\right)
 \left\langle \mathbf{f}^{l},\mathbf{v}^{l}_{{}_{M,N}}\right\rangle _{1}
 \\
 &=
 \nu \sum_{l=0}^{N}c(l)\,|\mathbf{v}^{l}_{{}_{M,N}}|_{1,2}^2
+
\sum_{l=0}^{N}
c\left(l\right)
 \left\langle \mathbf{f}^{l},\mathbf{v}^{l}_{{}_{M,N}}\right\rangle _{1}.
\end{split}
\end{equation}
It follows that
$$
F(\bar{\mathbf{z}})\geq 0
$$
if
$$
\frac{2\nu^2}{|G|} \sum_{k,l} c(l)\big|z^{k,l}_{{}_{M,N}}\big|^2=
|\bld{f}|^2_{\mathbb{H}_0^{-1,2}}.
$$
Indeed, by the Cauchy-Schwarz inequality,
$$
F(\bar{\mathbf{z}})\geq
|\bld{v}_{{}_{M,N}}|_{\mathbb{H}_0^{1,2}}
\Big(\nu |\bld{v}_{{}_{M,N}}|_{\mathbb{H}_0^{1,2}} -
|\bld{f}|_{\mathbb{H}_0^{-1,2}}\Big),
$$
whereas the  Poincar\'{e} inequality \eqref{PI} implies
$$
|\bld{v}_{{}_{M,N}}|_{\mathbb{H}_0^{1,2}}^2
\geq \frac{2|\bld{v}_{{}_{M,N}}|_{\mathbb{L}^{2}}^2}{|G|}=
\frac{2}{|G|} \sum_{k,l} c(l)\big|z^{k,l}_{{}_{M,N}}\big|^2.
$$
By a multi-dimensional version of the intermediate value
theorem \cite[Lemma IX.3.1]{ga}, we conclude that there exists a
$\bar{\mathbf{z}}^*$ with
$$
\frac{2\nu^2}{|G|}
 \sum_{k,l} c(l)\big|z^{*,k,l}_{{}_{M,N}}\big|^2
 \leq
|\bld{f}|^2_{\mathbb{H}_0^{-1,2}}
$$
such that $Q_{k,l}(\bar{\mathbf{z}}^*)=0$ for all $k,l$.
In other words, we now have existence of solution of
\eqref{GA1} for every $M\geq 1$. Moreover,  by \eqref{ES-1}
and the Cauchy-Schwarz inequality, the solution satisfies
$$
\left\vert \bld{v}_{{}_{M,N}}\right\vert_{\mathbb{H}^{1,2}_0} \leq
\frac{\left\vert
\bld{f}\right\vert _{\mathbb{H}^{-1,2}_0}}{\nu},
$$
which, together with the compactness of the embedding
$\mathbf{H}^{1,2}_0(G)\subset \mathbf{L}^2(G)$, allows us
to pass to the limit  $M\to \infty$ and get a solution of \eqref{320}
in the same way as in \cite[pp. 600--601]{ga}.

To establish uniqueness, let $\bld{v}_N$ be the  solution
of \eqref{320} satisfying
\eqref{Proj-US} and let $\tilde{\bld{v}} $ the difference between
$\bld{v}_N$ and any other possible solution of \eqref{320}.
By \eqref{NSE-Exp},
\begin{equation}
\label{US-GA1}
\nu\, \mathbb{E}(\nabla\tilde{\bld{v}},\nabla\bld{w})_0+
 \mathbb{E}\mathfrak{a}(\bld{v}_N,\tilde{\bld{v}},\bld{w})-
 \mathbb{E}
 \mathfrak{a}(\tilde{\bld{v}},\tilde{\bld{v}}-\bld{v}_N,\bld{w})=0.
 \end{equation}
Because $\mathcal{P}^N\tilde{\bld{v}}=\tilde{\bld{v}}$ and
$\tilde{\bld{v}}\in \widehat{\mathbb{H}}^{1,2}_0(G)$,
we can put $\bld{w}=\tilde{\bld{v}}$ in \eqref{US-GA1} and then
use \eqref{CP} and \eqref{AS} to conclude that
\begin{equation*}
\nu\, \mathbb{E}|\tilde{\bld{v}}|_{1,2}^2+
 \mathbb{E}
 \mathfrak{a}(\tilde{\bld{v}},\bld{v}_N,\tilde{\bld{v}})=0,
 \end{equation*}
 which, together with \eqref{cont-a0} implies
 \begin{equation}
 \label{US-GA2}
 \mathbb{E}\left(|\tilde{\bld{v}}|_{1,2}^2
 \left(\nu-\frac{\sqrt{|G|}}{2}|\bld{v}_N|_{1,2}\right)\right)\leq 0.
 \end{equation}
 If \eqref{Proj-US} holds, then
 $$
 \nu-\frac{\sqrt{|G|}}{2}|\bld{v}_N|_{1,2}\geq \nu \varepsilon_N>0,
 $$
 and \eqref{US-GA2} is only possible when
 $ \mathbb{P} \big(|\tilde{\bld{v}}|_{1,2}^2=0\big)=1$, that is, when
 $\bld{v}_N$ is the unique solution of \eqref{320}.
\end{proof}

Similar to \eqref{small-f}, we need \eqref{Proj-US} to guarantee uniqueness of the stochastic Glerkin approximation. In fact,
 without \eqref{small-f}, uniqueness can fail for the original equation \eqref{10}  \cite[Section IX.2]{ga}.
Even though system of equations  \eqref{320} has been successfully used for numerical simulations \cite{StatStochNSE1, StatStochNSE2}, it is not immediately clear how condition
 \eqref{Proj-US} can be verified.

The following theorem is the first key result of the paper and  shows that, under \eqref{small-f} and \eqref{Proj-US}, stochastic Galerkin approximation
 $\bld{v}_N$ is indeed an approximation of the orthogonal projection $\mathcal{P}^N\bld{u}$.
 In particular, if $\varepsilon_N$ does not depend on $N$, then
   stochastic Galerkin approximation is asymptotically equivalent to the
orthogonal projection, in the sense that, as $N\to \infty$,  both converge to the true solution at the same rate.

\begin{theorem}
\label{t2}
Assume that \eqref{small-f} holds so that \eqref{10} has a unique
 solution $\bld{u}$, and let
$\bld{v}_N=\bld{v}_N(\xi)$ be the unique solution of \eqref{30} satisfying
\eqref{Proj-US}.
Then
\begin{align}
\label{lim-p}
&|\mathcal{P}^N\bld{u}-\bld{v}_N|_{\mathbb{H}^{1,2}_0}
\leq \frac{\theta+1-\varepsilon_N}{\varepsilon_N}\,
|\bld{u}-\mathcal{P}^N\bld{u}|_{\mathbb{H}^{1,2}_0},\\
\label{lim-intr}
&| \bld{u}-\bld{v}_N|_{\mathbb{H}^{1,2}_0}
\leq \left(1+\frac{\theta+1-\varepsilon_N}{\varepsilon_N}\right)\,
|\bld{u}-\mathcal{P}^N\bld{u}|_{\mathbb{H}^{1,2}_0}.
\end{align}
\end{theorem}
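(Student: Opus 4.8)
The plan is to prove the projected-error bound \eqref{lim-p} first and then deduce \eqref{lim-intr} for free from the triangle inequality
$$
|\bld{u}-\bld{v}_N|_{\mathbb{H}^{1,2}_0}\le|\bld{u}-\mathcal{P}^N\bld{u}|_{\mathbb{H}^{1,2}_0}+|\mathcal{P}^N\bld{u}-\bld{v}_N|_{\mathbb{H}^{1,2}_0},
$$
where the factors $1$ and $(\theta+1-\varepsilon_N)/\varepsilon_N$ combine into $1+(\theta+1-\varepsilon_N)/\varepsilon_N$. For \eqref{lim-p} I would first set up a Galerkin orthogonality: test \eqref{ff1-w-E} (for $\bld{u}$) and \eqref{NSE-Exp} (for $\bld{v}_N$) against the same $\bld{w}\in\mathcal{P}^N(\widehat{\mathbb{H}}^{1,2}_0(G))$ and subtract. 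Because $\mathcal{P}^N$ acts only on the random variable and commutes with the spatial gradient, and $\bld{w}$ lies in the range of $\mathcal{P}^N$, one has $\mathbb{E}(\nabla\bld{u},\nabla\bld{w})_0=\mathbb{E}(\nabla\mathcal{P}^N\bld{u},\nabla\bld{w})_0$, while the forcing terms cancel. Writing $\bld{e}=\mathcal{P}^N\bld{u}-\bld{v}_N$ (a legitimate test function, since $\bld{u}\in\widehat{\mathbb{H}}^{1,2}_0(G)$ gives $\mathcal{P}^N\bld{u}\in\mathcal{P}^N(\widehat{\mathbb{H}}^{1,2}_0(G))$) and $\bld{\eta}=\bld{u}-\mathcal{P}^N\bld{u}$, so that $\bld{u}-\bld{v}_N=\bld{\eta}+\bld{e}$, and taking $\bld{w}=\bld{e}$ yields
$$
\nu\,\mathbb{E}|\bld{e}|_{1,2}^2 = -\mathbb{E}\big[\mathfrak{a}(\bld{u},\bld{u},\bld{e})-\mathfrak{a}(\bld{v}_N,\bld{v}_N,\bld{e})\big].
$$

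The decisive step is the correct algebraic splitting of the trilinear difference. I would write $\mathfrak{a}(\bld{u},\bld{u},\bld{e})-\mathfrak{a}(\bld{v}_N,\bld{v}_N,\bld{e})=\mathfrak{a}(\bld{u},\bld{u}-\bld{v}_N,\bld{e})+\mathfrak{a}(\bld{u}-\bld{v}_N,\bld{v}_N,\bld{e})$, substitute $\bld{u}-\bld{v}_N=\bld{\eta}+\bld{e}$, and invoke \eqref{CP} (valid pathwise because $\bld{u}\in\widehat{\mathbb{H}}^{1,2}_0(G)$) to kill $\mathfrak{a}(\bld{u},\bld{e},\bld{e})=0$. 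What remains is
$$
\mathfrak{a}(\bld{u},\bld{\eta},\bld{e})+\mathfrak{a}(\bld{\eta},\bld{v}_N,\bld{e})+\mathfrak{a}(\bld{e},\bld{v}_N,\bld{e}).
$$
Keeping $\bld{v}_N$ (rather than $\bld{u}$) as the middle argument of the quadratic-in-$\bld{e}$ term is exactly what will produce $\varepsilon_N$ in the denominator.

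I would then estimate the three pieces with the continuity bound \eqref{cont-a0}, the pathwise a priori bound $\tfrac{\sqrt{|G|}}{2}|\bld{u}|_{1,2}\le\nu\theta$ from \eqref{small-u}, and the pathwise bound $\tfrac{\sqrt{|G|}}{2}|\bld{v}_N|_{1,2}\le\nu(1-\varepsilon_N)$ from \eqref{Proj-US}, giving pathwise
$$
|\mathfrak{a}(\bld{u},\bld{\eta},\bld{e})|+|\mathfrak{a}(\bld{\eta},\bld{v}_N,\bld{e})|+|\mathfrak{a}(\bld{e},\bld{v}_N,\bld{e})|\le\nu(\theta+1-\varepsilon_N)|\bld{\eta}|_{1,2}|\bld{e}|_{1,2}+\nu(1-\varepsilon_N)|\bld{e}|_{1,2}^2.
$$
Taking expectations, moving the quadratic term to the left (which leaves the factor $\varepsilon_N$), applying the Cauchy--Schwarz inequality in $\Omega$ to $\mathbb{E}[|\bld{\eta}|_{1,2}|\bld{e}|_{1,2}]\le|\bld{\eta}|_{\mathbb{H}^{1,2}_0}|\bld{e}|_{\mathbb{H}^{1,2}_0}$, and dividing by $|\bld{e}|_{\mathbb{H}^{1,2}_0}$ (the case $\bld{e}=0$ being trivial) yields $\varepsilon_N|\bld{e}|_{\mathbb{H}^{1,2}_0}\le(\theta+1-\varepsilon_N)|\bld{\eta}|_{\mathbb{H}^{1,2}_0}$, which is \eqref{lim-p}.

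The main obstacle is recognizing the correct decomposition. Every splitting of the trilinear difference leaves exactly one term quadratic in $\bld{e}$, and only the choice that places $\bld{v}_N$ in the middle slot lets \eqref{Proj-US} absorb this term into the left-hand side with the sharp constant $\varepsilon_N$; putting $\bld{u}$ there would instead produce the factor $1-\theta$ and a differently shaped (and, when $\varepsilon_N$ is small, weaker) estimate. A secondary point requiring care is the pathwise applicability of \eqref{CP} and of the a priori bounds, together with the commutation of $\mathcal{P}^N$ with the gradient that underlies the Galerkin orthogonality.
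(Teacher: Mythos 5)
Your proposal is correct and follows essentially the same route as the paper: the same Galerkin-orthogonality setup, the same splitting of the trilinear difference with $\mathfrak{a}(\bld{u},\bld{e},\bld{e})=0$ killed by \eqref{CP} and the quadratic term $\mathfrak{a}(\bld{e},\bld{v}_N,\bld{e})$ absorbed via \eqref{Proj-US}, and the same use of \eqref{small-u}, \eqref{cont-a0}, and Cauchy--Schwarz to arrive at $\varepsilon_N|\bld{e}|_{\mathbb{H}^{1,2}_0}\le(\theta+1-\varepsilon_N)|\bld{\eta}|_{\mathbb{H}^{1,2}_0}$, with \eqref{lim-intr} by the triangle inequality. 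Only the notation differs ($\bld{e},\bld{\eta}$ versus the paper's $\bld{u}_N$, $\bld{u}-\bld{u}^N$).
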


\begin{proof}
To make the formulas shorter, we write
$$
\bld{u}^N=\mathcal{P}^N\bld{u},\
 \bld{u}_N=\bld{u}^N-\bld{v}_N, \ \ {\rm and}\ \
 \bld{w}^N=\mathcal{P}^N\bld{w} \ \ {\rm for}\ \ \bld{w}\in\widehat{\mathbb{H}}_{0}^{1,2}(G)
$$
Using \eqref{ff1-w-E},
$$
\nu\,\mathbb{E}\big(\nabla \bld{u}^N,\nabla\bld{w}^N\big)_0+
\mathbb{E}\mathfrak{a}(\bld{u},\bld{u},\bld{w}^N)=
-\mathbb{E}\langle \bld{f},\bld{w}^N\rangle_1,
$$
and, after subtracting \eqref{NSE-Exp},
\begin{equation*}
\nu\,\mathbb{E}\big(\nabla \bld{u}_N,\nabla\bld{w}^N\big)_0+
\mathbb{E}\mathfrak{a}(\bld{u},\bld{u},\bld{w}^N)-
\mathbb{E}\mathfrak{a}(\bld{v}_N,\bld{v}_N,\bld{w}^N)=0.
\end{equation*}
Next,
\begin{align*}
\mathfrak{a}(\bld{u},\bld{u},\bld{w}^N)-
\mathfrak{a}(\bld{v}_N,\bld{v}_N,\bld{w}^N)&=
\mathfrak{a}(\bld{u},\bld{u}-\bld{v}_N,\bld{w}^N)+
\mathfrak{a}(\bld{u}-\bld{v}_N,\bld{v}_N,\bld{w}^N)\\
& =
\mathfrak{a}(\bld{u},\bld{u}-\bld{u}^N,\bld{w}^N)+
\mathfrak{a}(\bld{u},\bld{u}^N-\bld{v}_N,\bld{w}^N)\\
&+
\mathfrak{a}(\bld{u}-\bld{u}^N,\bld{v}_N,\bld{w}^N)+
\mathfrak{a}(\bld{u}^N-\bld{v}_N,\bld{v}_N,\bld{w}^N).
\end{align*}
Taking $\bld{w}^N=\bld{u}_N$ leads to
$$
\nu\,\mathbb{E}|\bld{u}_N|_{1,2}^2
+\mathbb{E}\mathfrak{a}(\bld{u}_N,\bld{v}_N,\bld{u}_N)
+\mathbb{E}\mathfrak{a}(\bld{u},\bld{u}-\bld{u}^N,\bld{u}_N)
+\mathbb{E}\mathfrak{a}(\bld{u}-\bld{u}^N,\bld{v}_N,\bld{u}_N)
=0.
$$
Then \eqref{cont-a0}, \eqref{small-u}, \eqref{Proj-US}, and the
Cauchy-Schwarz inequality [for expectations]  imply
$$
\nu\varepsilon_N  |\bld{u}_N|_{\mathbb{H}_0^{1,2}}^2
\leq \nu(\theta+1-\varepsilon_N) |\bld{u}_N|_{\mathbb{H}_0^{1,2}}\
|\bld{u}-\bld{u}^N|_{\mathbb{H}_0^{1,2}}.
 $$
We now get \eqref{lim-p}, and then, by triangle inequality, \eqref{lim-intr}.
\end{proof}

\section{A Non-Intrusive Approximation Using Gauss Quadrature}
\label{sec:GQ}

Let $\left( \Omega ,\mathcal{F},\mathbb{P}\right) $
be a probability space with a random variable $\xi$
 and let $\mathcal{F}_{\xi}$ be the $\mathbb{P}$-completion of
the sigma algebra generated by $\xi$. We assume that
 the moment generating function
 $
 \lambda\mapsto \mathbb{E}e^{\lambda\xi}
 $
 is defined in some neighborhood of $\lambda=0$.
 Under this  assumption, given a collection
 $\{P_n,\ n\geq 0\}$  of orthogonal polynomials corresponding to the distribution of $\xi$, the collection of random variables
 $$
 \mathfrak{P}_{n}=P_{n}\left( \xi\right),\ n\geq 0,
 $$
 is an orthogonal basis  in
 $L_2(\Omega ,\mathcal{F}_{\xi},\mathbb{P})$.
  Denote by  $\mathcal{P}^{N}$
  the orthogonal projection
   in $L_2(\Omega ,\mathcal{F}_{\xi},\mathbb{P})$
   on the subspace spanned  by
$\left\{ \mathfrak{P}_{k},\ k=0,\ldots ,N\right\}.$
Let
$$
c(n)=\mathbb{E}\mathfrak{P}_{n}^2,
$$
so that, for every $\zeta\in L_2(\Omega ,\mathcal{F}_{\xi},\mathbb{P})$,
$$
\zeta=\sum_{k\geq 0}
\frac{\mathbb{E}\big(\zeta\mathfrak{P}_k)}{c(k)}\,
\mathfrak{P}_{k}.
$$

In this section, we assume that the random forcing in equation \eqref{10}  has a special form
\begin{equation}
\label{f-1d}
 \bld{f}(x)=\mathbf{f}\left( \xi ,x\right) =
 \left( f^{1}\left(\xi ,x\right),f^{2}(\xi ,x)\right),
 \end{equation}
 where $\mathbf{f}$ is a   non-random  vector field.
 If $\bld{u}=\bld{u}(\xi)$ is a solution of \eqref{10} corresponding to the particular realization of $\xi$ and $\bld{u}^N=\mathcal{P}^{N}\bld{u}$, then
$$
\bld{u}(\xi) \approx \bld{u}^N(\xi),  \quad \bld{u}^N(\xi) = \sum_{k=0}^{N}\mathbf{u}_k \frac{\mathfrak{P}(\xi)}{c(k)},
\quad \mathbf{u}_k=\bE\big(\bld{u}\mathfrak{P}_k\big).
$$
To compute  the coefficients $\mathbf{u}_k,\ k=0,\ldots,N$, we use the {\em Gauss quadrature} approximation
$ \mathbf{u}_k\approx  \mathbf{u}^{(N)}_k$, where
\begin{equation}
\label{eq:NI0}
 \mathbf{u}^{(N)}_k= \sum_{j=1}^{N+1} w_{j,N}\bld{u}(\xi_{j,N})\mathfrak{P}_k(\xi_{j,N}),
\end{equation}
  $\xi_{j,N},\ j=1,\ldots, N+1,$ are the roots of $P_{N+1}$, and $w_{j,N}$ are the corresponding weights; cf. \cite[Section 1.4]{WGaut-OrtPoly}.
  The resulting {\em discrete projection} or {\em pseudo-spectral} approximation,
\begin{equation}
\label{eq:NI-1}
\bld{u}^{(N)}(\xi)= \sum_{k=0}^N \mathbf{u}^{(N)}_k\, \frac{\mathfrak{P}_k(\xi)}{c(k)},
\end{equation}
requires the solution $\bld{u}(\xi_{j,N})$ of \eqref{10} for $N+1$ distinct values of $\xi$.

To simplify the formulas, it is convenient to introduce the square matrix $\mathfrak{W}= \big( \mathfrak{W}_{kj},\ k=0,\ldots,N, \ j=1,\ldots, N+1\big)$, with
$$
\mathfrak{W}_{kj}=w_{j,N}\mathfrak{P}_k(\xi_{j,N}).
$$
Then \eqref{eq:NI0} becomes
\begin{equation}
\label{eq:NI0-w}
 \mathbf{u}^{(N)}_k= \sum_{j=1}^{N+1}\mathfrak{W}_{kj}\bld{u}(\xi_{j,N}).
\end{equation}

The basic property  of the Gauss quadrature is that the equality
$$
\bE h(\xi)=\sum_{j=1}^{N+1} w_{j,N} h(\xi_{j,N})
$$
holds for all functions $h=h(\xi)$ that are polynomials in $\xi$ of degree at most  $2N+1$; cf. \cite[Theorem 1.45]{WGaut-OrtPoly}.
In particular, for every $\ k,m=0,\ldots, N,$
$$
\sum_{j=1}^{N+1}\mathfrak{W}_{kj}\mathfrak{P}_m(\xi_{j,N}) = \sum_{j=1}^{N+1} w_{j,N}\mathfrak{P}_k(\xi_{j,N})
\mathfrak{P}_m(\xi_{j,N})=\bE\big(\mathfrak{P}_k\mathfrak{P}_m\big)=
\begin{cases}
c(k)>0,& \text {if } k=m,\\
0,& \text {if } k\not=m,
\end{cases}
$$
which means that the matrix $\mathfrak{W}$ is non-singular.

With the above choice of the sampling points $\xi_{j,N}$, the discrete projection \eqref{eq:NI-1} is equivalent to interpolation:

\begin{proposition}
The equality
\begin{equation}
\label{eq:NI-2}
\bld{u}(\xi_{j,N})=\bld{u}^{(N)}(\xi_{j,N})
\end{equation}
holds for all $j=1,\ldots, N+1$.
\end{proposition}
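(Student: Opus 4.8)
The plan is to evaluate the interpolant $\bld{u}^{(N)}$ at a node $\xi_{i,N}$ straight from its definition and reduce the claim to a single matrix identity. First I would substitute \eqref{eq:NI-1} and \eqref{eq:NI0-w} and interchange the two finite sums:
\[
\bld{u}^{(N)}(\xi_{i,N})
= \sum_{k=0}^N \frac{\mathfrak{P}_k(\xi_{i,N})}{c(k)}\sum_{j=1}^{N+1}\mathfrak{W}_{kj}\bld{u}(\xi_{j,N})
= \sum_{j=1}^{N+1}\Bigg(\sum_{k=0}^N \frac{\mathfrak{P}_k(\xi_{i,N})}{c(k)}\,\mathfrak{W}_{kj}\Bigg)\bld{u}(\xi_{j,N}).
\]
Thus it suffices to show that the inner coefficient $M_{ij}:=\sum_{k=0}^N c(k)^{-1}\mathfrak{P}_k(\xi_{i,N})\mathfrak{W}_{kj}$ equals the Kronecker delta $\delta_{ij}$, for then only the $j=i$ term survives in the outer sum and it carries weight one.

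Second, I would cast $M_{ij}$ in matrix form. Introduce the $(N+1)\times(N+1)$ matrix $\mathfrak{V}$ with entries $\mathfrak{V}_{jk}=\mathfrak{P}_k(\xi_{j,N})$ (rows $j=1,\ldots,N+1$, columns $k=0,\ldots,N$) and the diagonal matrix $D=\mathrm{diag}\big(c(0),\ldots,c(N)\big)$, which is invertible because each $c(k)>0$. Then $M=\mathfrak{V}D^{-1}\mathfrak{W}$, so the assertion is precisely $\mathfrak{V}D^{-1}\mathfrak{W}=I$. The discrete orthogonality already recorded before the statement, namely $\sum_{j=1}^{N+1}\mathfrak{W}_{kj}\mathfrak{P}_m(\xi_{j,N})=c(k)\delta_{km}$, is exactly the identity $\mathfrak{W}\mathfrak{V}=D$, and hence $\mathfrak{W}\big(\mathfrak{V}D^{-1}\big)=I$.

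The one conceptual point — and the only place where anything beyond bookkeeping enters — is that the recorded orthogonality produces the product $\mathfrak{W}\mathfrak{V}$ in \emph{one} order, exhibiting $\mathfrak{V}D^{-1}$ merely as a right inverse of $\mathfrak{W}$, whereas interpolation requires the reverse order $\mathfrak{V}D^{-1}\mathfrak{W}$. This gap is bridged by the nonsingularity of $\mathfrak{W}$ already established in the text: a right inverse of a square nonsingular matrix is its two-sided inverse, so $\mathfrak{V}D^{-1}=\mathfrak{W}^{-1}$ and therefore $\mathfrak{V}D^{-1}\mathfrak{W}=I$. This gives $M_{ij}=\delta_{ij}$, and hence $\bld{u}^{(N)}(\xi_{i,N})=\bld{u}(\xi_{i,N})$ for every $i=1,\ldots,N+1$, completing the proof.
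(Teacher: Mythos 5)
Your argument is correct and rests on exactly the same two facts as the paper's proof: the discrete orthogonality $\sum_{j}\mathfrak{W}_{kj}\mathfrak{P}_m(\xi_{j,N})=c(k)\delta_{km}$ (exactness of the Gauss quadrature on polynomials of degree at most $2N$) and the nonsingularity of $\mathfrak{W}$. The paper packages this slightly differently---it applies quadrature exactness to the products $\bld{u}^{(N)}\mathfrak{P}_k$ to show that $\mathfrak{W}$ sends the nodal values of $\bld{u}^{(N)}$ and of $\bld{u}$ to the same vector and then cancels $\mathfrak{W}$---but your right-inverse-is-a-two-sided-inverse computation is the same argument in transposed form.
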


\begin{proof}
Equality \eqref{eq:NI-1} implies that  $\bld{u}^{(N)}$ is a polynomial in $\xi$ of order at most $N$, so that
each product $\bld{u}^{(N)}\mathfrak{P}_k$, $k=0,\ldots, N,$ is a polynomial in $\xi$ or order at most $2N$.
Then
\begin{equation}
\label{eq:NI-2-1}
\bE \big(\bld{u}^{(N)}\mathfrak{P}_k\big) = \sum_{j=1}^{N+1} \mathfrak{W}_{kj}\bld{u}^{(N)}(\xi_{j,N}),\ k=0,\ldots, N.
\end{equation}
On the other hand, \eqref{eq:NI-1} also implies
\begin{equation}
\label{eq:NI-2-2}
\bE \big(\bld{u}^{(N)}\mathfrak{P}_k\big)= \mathbf{u}^{(N)}_k,
\end{equation}
and then \eqref{eq:NI-2} follows from \eqref{eq:NI0-w} and non-degeneracy of the matrix $\mathfrak{W}$.
\end{proof}

The following theorem is the second key result of the paper and  gives an upper bound on the approximation error $\bE |\bld{u} -\bld{u}^{(N)} |^2_{1,2}$.
Recall that $\bld{u}^N=\mathcal{P}^N\bld{u}$.

\begin{theorem}
\label{th:NI}
Define
\begin{equation}
\label{sup-er-NI}
\delta_N=\sup_{\xi} |\bld{u}(\xi)-\bld{u}^N(\xi)|_{1,2}.
\end{equation}
Then
\begin{equation}
\label{eq:NI-main}
\bE |\bld{u} -\bld{u}^{(N)} |^2_{1,2}\leq \bE|\bld{u} -\bld{u}^N |_{1,2}^2+N\big(\delta_N\big)^2.
\end{equation}
\end{theorem}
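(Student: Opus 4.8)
The plan is to decompose the error as
\[
\bld{u}-\bld{u}^{(N)}=\big(\bld{u}-\bld{u}^N\big)+\big(\bld{u}^N-\bld{u}^{(N)}\big),
\]
where $\bld{u}^N=\mathcal{P}^N\bld{u}$ is the exact chaos projection and $\bld{u}^{(N)}$ the pseudo-spectral one, and to show that the two pieces are orthogonal in $\mathbb{H}^{1,2}_0(G)$, giving the Pythagorean identity
\[
\bE|\bld{u}-\bld{u}^{(N)}|^2_{1,2}
=\bE|\bld{u}-\bld{u}^N|^2_{1,2}+\bE|\bld{u}^N-\bld{u}^{(N)}|^2_{1,2}.
\]
The first term on the right is exactly the projection error in \eqref{eq:NI-main}, so everything reduces to controlling the ``discrete-versus-continuous projection'' term $\bE|\bld{u}^N-\bld{u}^{(N)}|^2_{1,2}$. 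The structural fact that makes this work is that both $\bld{u}^N$ and $\bld{u}^{(N)}$ are polynomials in $\xi$ of degree at most $N$ (the former by the definition of $\mathcal{P}^N$, the latter by \eqref{eq:NI-1}), so their difference $\bld{q}:=\bld{u}^N-\bld{u}^{(N)}$ lies in the span of $\{\mathfrak{P}_k\}_{k\le N}$, while $\bld{u}-\bld{u}^N$ is orthogonal to that span.

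To justify the orthogonality --- the step I expect to require the most care --- I would write $\bld{q}=\sum_{k=0}^{N}\mathbf{q}_k\mathfrak{P}_k$ with deterministic $\mathbf{q}_k\in\mathbf{H}^{1,2}_0(G)$, so that the cross term is a finite sum of scalars $\bE\big[\mathfrak{P}_k\,(\nabla(\bld{u}-\bld{u}^N),\nabla\mathbf{q}_k)_0\big]$. Because $\mathbf{q}_k$ is non-random and $\mathcal{P}^N$ acts only on the $\xi$-variable, it commutes with the deterministic spatial pairing against $\nabla\mathbf{q}_k$; hence the scalar random variable $(\nabla(\bld{u}-\bld{u}^N),\nabla\mathbf{q}_k)_0$ equals $\chi_k-\mathcal{P}^N\chi_k$ with $\chi_k:=(\nabla\bld{u},\nabla\mathbf{q}_k)_0$. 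For $k\le N$ the residual $\chi_k-\mathcal{P}^N\chi_k$ is orthogonal to $\mathfrak{P}_k$ in $L_2(\Omega)$, so every term vanishes and the Pythagorean identity follows. The only delicate point is this commutation of the $L_2(\Omega)$-projection with the spatial inner product, which I would verify by expanding $\mathcal{P}^N\bld{u}$ in the basis $\{\mathfrak{P}_k\}$ and using linearity.

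Finally, to bound $\bE|\bld{q}|^2_{1,2}$ I would exploit the exactness of Gauss quadrature. Since $\bld{q}$ has degree at most $N$ in $\xi$, the function $|\bld{q}(\xi)|^2_{1,2}=(\nabla\bld{q}(\xi),\nabla\bld{q}(\xi))_0$ is a polynomial in $\xi$ of degree at most $2N$, hence is integrated exactly by the $(N+1)$-node rule:
\[
\bE|\bld{q}|^2_{1,2}
=\sum_{j=1}^{N+1}w_{j,N}\,\big|\bld{u}^N(\xi_{j,N})-\bld{u}^{(N)}(\xi_{j,N})\big|^2_{1,2}.
\]
By the interpolation identity \eqref{eq:NI-2}, $\bld{u}^{(N)}(\xi_{j,N})=\bld{u}(\xi_{j,N})$, so each summand equals $|\bld{u}^N(\xi_{j,N})-\bld{u}(\xi_{j,N})|^2_{1,2}\le\delta_N^2$ by the definition \eqref{sup-er-NI} of $\delta_N$; as the weights are nonnegative and $\sum_j w_{j,N}=\bE\,1=1$, this gives $\bE|\bld{q}|^2_{1,2}\le\delta_N^2$. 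Combined with the Pythagorean identity this already yields $\bE|\bld{u}-\bld{u}^{(N)}|^2_{1,2}\le\bE|\bld{u}-\bld{u}^N|^2_{1,2}+\delta_N^2$, which is even stronger than \eqref{eq:NI-main}. Alternatively, one may estimate $\bE|\bld{q}|^2_{1,2}=\sum_{k=0}^{N}c(k)^{-1}|\mathbf{u}_k-\mathbf{u}^{(N)}_k|^2_{1,2}$ term by term, using the aliasing formula $\mathbf{u}^{(N)}_k-\mathbf{u}_k=\sum_j w_{j,N}\big(\bld{u}(\xi_{j,N})-\bld{u}^N(\xi_{j,N})\big)\mathfrak{P}_k(\xi_{j,N})$ together with $\sum_j w_{j,N}\mathfrak{P}_k(\xi_{j,N})^2=c(k)$; this cruder route reproduces a bound of order $N\delta_N^2$, of the type stated in \eqref{eq:NI-main}.
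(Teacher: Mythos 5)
Your proposal is correct, and your primary argument is genuinely different from --- and sharper than --- the paper's. Both start from the same Pythagorean decomposition $\bE|\bld{u}-\bld{u}^{(N)}|^2_{1,2}=\bE|\bld{u}-\bld{u}^N|^2_{1,2}+\bE|\bld{u}^N-\bld{u}^{(N)}|^2_{1,2}$ (the paper states the orthogonality without the justification you supply, which is fine as written). The paper then expands $\bld{q}=\bld{u}^N-\bld{u}^{(N)}$ coefficient by coefficient, uses the aliasing identity $\mathbf{u}_k-\mathbf{u}_k^{(N)}=\sum_j w_{j,N}\big(\bld{u}^N(\xi_{j,N})-\bld{u}(\xi_{j,N})\big)\mathfrak{P}_k(\xi_{j,N})$ together with the Cauchy--Schwarz inequality and $\sum_j w_{j,N}\mathfrak{P}_k^2(\xi_{j,N})=c(k)$ to get $|\mathbf{u}_k-\mathbf{u}_k^{(N)}|^2_{1,2}\le c(k)\delta_N^2$ for each $k$, and sums over $k=0,\dots,N$ --- exactly your ``cruder route,'' which produces the stated factor of order $N$. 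Your main route instead observes that $|\bld{q}(\xi)|^2_{1,2}$ is a polynomial in $\xi$ of degree at most $2N$, hence integrated exactly by the $(N{+}1)$-node Gauss rule, and that the interpolation property \eqref{eq:NI-2} converts each node value into $|\bld{u}^N(\xi_{j,N})-\bld{u}(\xi_{j,N})|^2_{1,2}\le\delta_N^2$; with $w_{j,N}>0$ and $\sum_j w_{j,N}=1$ this yields $\bE|\bld{q}|^2_{1,2}\le\delta_N^2$ and therefore the strictly stronger bound $\bE|\bld{u}-\bld{u}^{(N)}|^2_{1,2}\le\bE|\bld{u}-\bld{u}^N|^2_{1,2}+\delta_N^2$, with no factor of $N$. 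What the quadrature-exactness argument buys is precisely the removal of the dimension factor, which would make the subsequent condition \eqref{eq:NI-main-A} unnecessary ($\delta_N\to 0$ would suffice); what the paper's coefficientwise argument buys is an individual bound on each $|\mathbf{u}_k-\mathbf{u}_k^{(N)}|_{1,2}$, which can be of independent interest. Both arguments are valid; yours gives the better constant.
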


\begin{proof}
By orthogonality,
\begin{equation}
\label{eq:NI-main-1}
\begin{split}
\bE |\bld{u} -\bld{u}^{(N)} |^2_{1,2}&= \bE|\bld{u} -\bld{u}^N |^2_{1,2} +
\bE| \bld{u}^N - \bld{u}^{(N)} |^2_{1,2}\\
& = \bE|\bld{u} -\bld{u}^N |^2_{1,2} +
\sum_{k=0}^N \frac{|\mathbf{u}_k-\mathbf{u}_k^{(N)}|_{1,2}^2}{c(k)}.
\end{split}
\end{equation}
Combining \eqref{eq:NI0}, \eqref{eq:NI-2-1}, and \eqref{eq:NI-2-2}  results in
$$
 \mathbf{u}_k-\mathbf{u}_k^{(N)} = \sum_{j=1}^{N+1} w_{j,N} \big(\bld{u}^N(\xi_{j,N})-\bld{u}(\xi_{j,N})\big)\mathfrak{P}_k(\xi_{j,N})
 $$
 or, using the Cauchy-Schwarz inequality and $w_{j,N}>0$,
 $$
 |\mathbf{u}_k-\mathbf{u}_k^{(N)}|_{1,2}^2 \leq
  \left(\sum_{j=1}^{N+1} w_{j,N} |\bld{u}^N(\xi_{j,N})-\bld{u}(\xi_{j,N})|_{1,2}^2 \right)\left(
  \sum_{j=1}^{N+1}w_{j,N} \mathfrak{P}^2_k(\xi_{j,N})\right).
  $$
  Properties of the Gauss quadrature imply
  $$
  \sum_{j=1}^{N+1}w_{j,N} \mathfrak{P}^2_k(\xi_{j,N})= \bE\mathfrak{P}^2_k=c(k),\ k=0,\ldots, N,
  \text{\ and \ }
   \sum_{j=1}^{N+1}w_{j,N} =1,
  $$
whereas  \eqref{sup-er-NI} implies
 $$
 \sum_{j=1}^{N+1} w_{j,N} |\bld{u}^N(\xi_{j,N})-\bld{u}(\xi_{j,N})|_{1,2}^2 \leq \big(\delta_N\big)^2 \sum_{j=1}^{N+1}w_{j,N},
 $$
As a result,
$$
|\mathbf{u}_k-\mathbf{u}_k^{(N)}|_{1,2}^2\leq \big(\delta_N\big)^2c(k),
$$
and  \eqref{eq:NI-main} follows from \eqref{eq:NI-main-1}.
\end{proof}

Of course, $\bE|\bld{u} -\bld{u}^N |_{1,2}^2\leq \big(\delta_N\big)^2$, leading to a somewhat weaker form of \eqref{eq:NI-main}:\\
$
\bE |\bld{u} -\bld{u}^{(N)} |^2_{1,2} \leq \big(\delta_N\big)^2 (1+N).
$

\begin{remark} {\rm
Both intrusive and non-intrusive approximations require  an $L_{\infty}$-bound, either   in the form of  \eqref{Proj-US} or \eqref{sup-er-NI},   to
establish an $L_2$-bound on the approximation error; for  \eqref{eq:NI-main} to be useful, one additionally needs to establish
\begin{equation}
\label{eq:NI-main-A}
\lim_{N\to \infty} \sqrt{N}\delta_N =0.
\end{equation}
On the one hand, condition \eqref{eq:NI-main-A} is easier to verify   than condition  \eqref{Proj-US}. On the other hand, under condition  \eqref{Proj-US},
the  error bound \eqref{lim-intr}  can be better than \eqref{eq:NI-main}, and this difference can become even more pronounced as the
stochastic dimension of the problem (the number of independent  random variables in the input) grows.
}
\end{remark}

The proof of Theorem \ref{th:NI} does not use the fact that $\bld{u}$ solves \eqref{10}. This additional information about $\bld{u}$, as well as the properties of the random
variable $\xi$ and the function $\bld{f}(x)=\mathbf{f}(\xi,x)$, are necessary  to establish \eqref{eq:NI-main-A}.

As an example, consider the random variable $\xi$ that is uniform on $[-1,1]$.
Then $\mathfrak{P}_n=P_n(\xi)$, where  $P_n$ is  $n$th Legendre polynomial; the standard normalization \cite[equation (6.4.4.)]{SpFunct-AAR} is $P_n(1)=1$, and then
$$
c_n=\frac{1}{2}\int_{-1}^1 P_n^2(x)\, dx = \frac{1}{2n+1}.
$$

\begin{theorem}
\label{th:IN-Leg}
Assume that, in \eqref{f-1d}, the random variable $\xi$ is uniform on $[-1,1]$ and the function $\mathbf{f}$ is Lipschitz continuous as a function of  $\xi$:
there exists a positive number $C_f$ such that, for all $\xi_1,\xi_2\in [-1,1]$,
\begin{equation}
\label{Lip-f}
|\mathbf{f}(\xi_1,\cdot)-\mathbf{f}(\xi_2,\cdot)|_{-1,2}\leq C_f|\xi_1-\xi_2|.
\end{equation}
 If  \eqref{small-f} holds and $\bld{u}=\bld{u}(\xi)$ is the corresponding unique solution of \eqref{10}, then
 \begin{equation}
 \label{eq:IN-Leg}
 \sup_{\xi} |\bld{u}(\xi)-\bld{u}^N(\xi)|_{1,2}\leq CN^{-3/4}
 \end{equation}
 for some $C$ depending only on $C_f, \nu$, and $\theta$.
 In particular, we have \eqref{eq:NI-main-A}.
 \end{theorem}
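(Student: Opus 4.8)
The plan is to turn the probabilistic statement into a deterministic one about uniform convergence of a Legendre series, and then combine a sharp estimate of the Legendre coefficients of $\bld u(\cdot)$ with the cancellation among the Legendre polynomials. The first step is to show that $\xi\mapsto\bld u(\xi)$ is Lipschitz from $[-1,1]$ into $\widehat{\mathbf H}^{1,2}_0(G)$. For fixed $\xi_1,\xi_2$ the fields $\bld u(\xi_1)$ and $\bld u(\xi_2)$ are the (deterministic) solutions of \eqref{10} with forces $\mathbf f(\xi_1,\cdot)$ and $\mathbf f(\xi_2,\cdot)$; applying the difference estimate \eqref{difference-0} to this pair and using \eqref{Lip-f} gives $|\bld u(\xi_1)-\bld u(\xi_2)|_{1,2}\le L\,|\xi_1-\xi_2|$ with $L=C_f/\big(\nu(1-\theta)\big)$. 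Since $\widehat{\mathbf H}^{1,2}_0(G)$ is a Hilbert space, $\bld u$ is absolutely continuous with $|\partial_\xi\bld u(\xi)|_{1,2}\le L$ for a.e. $\xi$.

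Writing $\bld u(\xi)=\sum_{k\ge0}\mathbf a_k P_k(\xi)$ with $\mathbf a_k=\mathbf u_k/c(k)\in\widehat{\mathbf H}^{1,2}_0(G)$, the quantity to bound is the tail $\bld u(\xi)-\bld u^N(\xi)=\sum_{k>N}\mathbf a_kP_k(\xi)$. The basic coefficient estimate comes from one integration by parts: since $(2k+1)P_k=(P_{k+1}-P_{k-1})'$ and $P_{k+1}-P_{k-1}$ vanishes at $\pm1$, one gets $\mathbf a_k=-\tfrac12\int_{-1}^1(\partial_\xi\bld u)\,(P_{k+1}-P_{k-1})\,d\xi$. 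Cauchy--Schwarz together with $\|P_m\|_{L^2(-1,1)}^2=2/(2m+1)$ then gives $|\mathbf a_k|_{1,2}\le CL\,k^{-1/2}$, while Parseval for $\partial_\xi\bld u\in L^2\big(-1,1;\widehat{\mathbf H}^{1,2}_0(G)\big)$ gives the energy bound $\sum_k k\,|\mathbf a_k|_{1,2}^2\le CL^2$. These two facts already yield $|\bld u-\bld u^N|_{L^2}\lesssim N^{-1}$ and, weighting by the interior Legendre bound $(1-\xi^2)^{1/4}|P_k(\xi)|\le Ck^{-1/2}$, the preliminary estimate $(1-\xi^2)^{1/4}\,|\bld u(\xi)-\bld u^N(\xi)|_{1,2}\lesssim LN^{-1/2}$. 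However, this only gives $N^{-1/2}$ in the interior and degenerates at $\xi=\pm1$, so it is not enough.

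To reach $N^{-3/4}$ I would sum the tail by parts. Setting $\mathbf g_m=\int_{-1}^1(\partial_\xi\bld u)P_m\,d\xi$ one has $\mathbf a_k=-\tfrac12(\mathbf g_{k+1}-\mathbf g_{k-1})$, and Abel summation together with the identity $P_{m+1}-P_{m-1}=-\tfrac{(2m+1)(1-\xi^2)}{m(m+1)}P_m'$ rewrites the tail as $\bld u(\xi)-\bld u^N(\xi)=\tfrac{1-\xi^2}{2}\sum_{m>N}\mathbf g_m\,\tfrac{2m+1}{m(m+1)}P_m'(\xi)$ up to boundary terms at $m\approx N$. Since $|\partial_\xi\bld u|_{1,2}\le L$, bounding $\sup_\xi|\bld u(\xi)-\bld u^N(\xi)|_{1,2}$ reduces to estimating the $L^1(-1,1)$-norm, uniformly in $\xi$, of the kernel $(1-t^2)\sum_{m>N}\tfrac{2m+1}{m(m+1)}P_m(t)P_m'(\xi)$. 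The factor $1/\big(m(m+1)\big)$ supplies the $N^{-1}$ decay, and the weight $(1-t^2)$ damps precisely the endpoint contribution that makes the ordinary Legendre Lebesgue constant grow like $N^{1/2}$, cutting it down to $N^{1/4}$; the product is the asserted $N^{-3/4}$, which then feeds into \eqref{eq:NI-main-A}.

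The main obstacle is this last kernel estimate near $\xi,t=\pm1$: there the Legendre polynomials and their derivatives are largest ($|P_m'(\pm1)|\sim m^2$), absolute summation fails, and one must track the cancellation carefully, using the Bessel/Mehler--Heine asymptotics of $P_m(\cos\theta)$ in the boundary layer $\theta\sim1/m$ against the vanishing of the weight $1-t^2$. Making the $N^{1/4}$ saving rigorous, and checking that the boundary terms dropped in the Abel summation are of the same order, is the technical heart; everything before it is the soft reduction to a one–dimensional approximation problem.
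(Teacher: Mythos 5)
Your first step coincides with the paper's: the stability estimate \eqref{difference-0} together with \eqref{Lip-f} makes $\xi\mapsto\bld u(\xi)$ Lipschitz from $[-1,1]$ into $\widehat{\mathbf H}^{1,2}_0(G)$ with constant $C_f/(\nu(1-\theta))$, and from that point on the theorem is a purely deterministic statement about uniform convergence of the (vector-valued) Fourier--Legendre series of a Lipschitz function. After that the routes diverge: the paper simply combines three cited results --- Jackson's theorem for the best uniform approximation ($\mathcal E_N(\bld u)\le C/N$), the Wang--Xiang rate for the $L^2$ error of the Legendre partial sum, and the Bashmakova--Rafalson Lebesgue-type inequality relating $\sup_\xi|\bld u-\bld u^N|_{1,2}$ to these two quantities --- whereas you attempt to reprove the relevant approximation theory from scratch.

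The problem is that your argument stops exactly at the step that produces the exponent $3/4$. Everything you actually establish --- the coefficient bound $|\mathbf a_k|_{1,2}\lesssim Lk^{-1/2}$, the energy bound $\sum_k k|\mathbf a_k|_{1,2}^2\lesssim L^2$, and the weighted interior estimate --- yields only $(1-\xi^2)^{1/4}|\bld u(\xi)-\bld u^N(\xi)|_{1,2}\lesssim LN^{-1/2}$, which is both the wrong rate and degenerate at $\xi=\pm1$; and the supremum in \eqref{sup-er-NI} does include the endpoints, where $|P_k(\pm1)|=1$ and absolute summation of the tail fails. The Abel-summation reduction to a kernel estimate is plausible (note the damping factor should be $1-\xi^2$, attached to $P_m'(\xi)$, not $1-t^2$ as written), but the claimed $N^{1/4}$ saving in the uniform-in-$\xi$ $L^1(dt)$ norm of that kernel near the endpoints is asserted, not proved, and you acknowledge that making it rigorous is the ``technical heart.'' That estimate is precisely the content of the Lebesgue-type inequality the paper imports from Bashmakova--Rafalson, so the proposal as written does not establish \eqref{eq:IN-Leg}; it reduces it to an unproved harmonic-analysis lemma of comparable difficulty. (Whichever route is taken, a sentence is also needed to justify applying these scalar approximation theorems to $\widehat{\mathbf H}^{1,2}_0(G)$-valued functions, e.g.\ by testing against unit vectors.)
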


 \begin{proof} By  \eqref{difference-0},
 \begin{equation}
 \label{Lip-sol}
 |\bld{u}(\xi_1)-\bld{u}(\xi_2)|_{1,2}\leq \frac{|\mathbf{f}(\xi_1,\cdot)-\mathbf{f}(\xi_2,\cdot)|_{-1,2}}{\nu(1-\theta)} \leq
 \frac{C_f}{\nu(1-\theta)}\, |\xi_1-\xi_2|.
 \end{equation}
 For the rest of the proof, $C$ denotes  positive number depending only on $C_f, \nu$, and $\theta$. The value of $C$ can be different in different formulas.

 Let  $\mathcal{E}_N$ be the error of the best uniform approximation of $\bld{u}$ by an element of $\widehat{\mathbb{H}}^{1,2}_{0}(G)$ that is
 a  polynomial of degree at most $N$ in $\xi$:
 $$
 \mathcal{E}_N(\bld{u})=\inf\Big(\max_{\xi \in [-1,1]}|\bld{u}(\xi)-\bld{v}(\xi)|_{1,2} : \bld{v}\in \mathcal{P}^N\big(\widehat{\mathbb{H}}^{1,2}_{0}(G)\big)\Big).
 $$
 Then
 \begin{itemize}
 \item Jackson's Theorem \cite[Theorem 1.4]{Rivlin-appr}, together with \eqref{Lip-sol}, implies
 \begin{equation}
 \label{UBn1}
  \mathcal{E}_N(\bld{u})\leq \frac{C}{N};
  \end{equation}
  \item Combining \eqref{Lip-sol} with  \cite[Theorem 2.1]{Legendre-Appr1} yields
  \begin{equation}
  \label{UBn2}
  \bE |\bld{u} -\bld{u}^{(N)} |^2_{1,2} \leq \frac{C}{N^3};
  \end{equation}
  \item Combining   \eqref{UBn1} and \eqref{UBn2} with \cite[ Theorem 1 (p=2)]{Legendre-Appr3} leads to
     \eqref{eq:IN-Leg} and completes  the proof.
  \end{itemize}

\end{proof}

\section{Summary and Discussion}
\label{sec4}

Within the general framework of numerical analysis, this paper studies {\em a priori} error bounds, as opposed to {\em a posteriori} error analysis that requires some basic knowledge
about convergence of the numerical procedure; cf. \cite[Section 9.3]{ErrBnd-AO}.
Comparing the (intrusive) stochastic Galerkin approximation [Theorem \ref{t2}] and a (non-intrusive) stochastic collocation/Gauss quadrature approximation [Theorem \ref{th:NI}]
for  equation \eqref{10}, we see that
\begin{itemize}
\item The intrusive approximation  works for a broader class of random input and can, in principle,  achieve an asymptotically optimal rate of convergence;
\item The non-intrusive approximation is easier to study, both analytically and numerically.
\end{itemize}

The main technical difficulties to overcome   when analyzing
stochastic Galerkin approximation in general and when proving
  Theorem \ref{t2} in particular is related to the
fact that, for a  nonlinear equation,
$$
\mathcal{P}^N\bld{v}\not= \bld{v}_N.
$$
The  two possible  sources of non-linearity are
(a) the structure of the underlying deterministic equation, and (b)
the way the random perturbation enters the equation. For example,
the heat equation
\begin{equation}
\label{eq-S1}
\frac{\partial \bld{v}}{\partial t}=a\, \Delta \bld{v}
\end{equation}
with random $a$   is non-linear when it comes to
  polynomial chaos approximation. Another example is \eqref{10}
  with random $\nu$; see  \eqref{32-nu} below.
  This difficulty can be somewhat mitigated by replacing the usual product with
Wick product  \cite{KL-BE, mr12}, which is a
 convolution operation $\diamond$ such that
$\mathfrak{P}_m\diamond\mathfrak{P}_n=
\alpha_{mn}\,\mathfrak{P}_{m+n}$,
$\alpha_{mn}\in \mathbb{R}$ \cite{mr01}; the price
to pay is reduction of  physical relevance of the resulting equations.
Moreover, the analysis is much more manageable for equations of the type \eqref{eq-S1},
 when the underlying deterministic equation is linear \cite{StochGalerkin-error3, StochGalerkin-error4}.
 In fact, it is the ``deterministic nonlinearity'' that leads to  hard-to-verify conditions of the type \eqref{Proj-US}.

While the setting in the paper, a stationary two-dimensional Navier-Stokes
system with zero boundary conditions and additive random perturbation,
is intentionally simple to isolate the effects of non-linearity
(the convection term)  on the stochastic Galerkin approximation, some of the results are rather
universal and can be used for many other equations with a quadratic-type
nonlinearity. The key is equality  \eqref{conv-proj} describing the
product of two  chaos expansions.

For example, \eqref{conv-proj} implies that
equations \eqref{32} describe the stochastic Galerkin approximation
for the stationary Navier-Stokes system in any number of dimensions and
with randomness in both boundary conditions and the external force;
 after minor modifications, time-dependent problems
 with a random initial condition will also be covered. The number of random
 variables does not matter either, as long as the corresponding
 orthogonal basis $\{\mathfrak{P}_n,\ n\geq 0\}$ can be constructed
 \cite{mr01}.

Similarly, \eqref{conv-proj} shows that the stochastic Galerkin approximation for the system \eqref{10} with random viscosity will be
\begin{equation}
\label{32-nu}
\sum_{m,k=0}^N A_{m,k;l}\,\nu_k \,\Delta \mathbf{v}^{m}_N
=\sum_{m,k=0}^N A_{m,k;l}\,
(\mathbf{v}_N^k\cdot\nabla)
\mathbf{v}_N^m+\nabla p^{l}_N+\mathbf{f}^l,\ l=0,\ldots,N,
\end{equation}
where we assume
$$
\nu=\sum_{k=0}^{\infty} \nu_k\mathfrak{P}_k.
$$
Equation \eqref{32-nu} illustrates the effects of  two sources of
 nonlinearity: the convection term leads to the coupling of the functions  $\mathbf{v}_N^m$ on the right-hand side, whereas random viscosity
leads  to a similar  coupling  on the left-hand side. While not very different from \eqref{32},
analysis of \eqref{32-nu} must be carried out from scratch and, for now,
 is left to an interested reader.

To conclude, let us note that there are many equivalent ways to write
Navier-Stokes equations: even the basic velocity-pressure formulation
\eqref{10} admits at least four alternative forms
\cite[Section 5]{NSE-forms},
not to mention alternative variables, such stream  function and
vorticity \cite{Vorticity-Stationary, Vorticity-basic}.
 For the purpose of our investigation, it appears that
none of the alternatives will lead to any major simplifications, but,
as reference \cite{NSE-forms} suggests,
 one should keep those alternatives in
mind for further analysis of various approximations of \eqref{10}.


\providecommand{\bysame}{\leavevmode\hbox to3em{\hrulefill}\thinspace}
\providecommand{\MR}{\relax\ifhmode\unskip\space\fi MR }
\providecommand{\MRhref}[2]{%
  \href{http://www.ams.org/mathscinet-getitem?mr=#1}{#2}
}
\providecommand{\href}[2]{#2}

\end{document}